\documentclass[12pt]{amsart}
\usepackage[margin=1in]{geometry}
\usepackage{graphicx}
\usepackage{turnstile}
\usepackage{amssymb}
\usepackage{amsmath}
\usepackage{amsfonts}
\usepackage{amstext}
\usepackage{amsthm}
\usepackage{setspace}
\usepackage{array}
\usepackage{turnstile}
\usepackage{textcomp}
\usepackage{tipa}
\usepackage{esvect}
\usepackage{graphicx}

\numberwithin{equation}{section}

\newtheorem{Lem}{Lemma}

\newtheorem{Thm}{Theorem}
\newtheorem*{claim}{Claim}
\newtheorem{definition}{Definition}
\newtheorem{remark}{Remark}
\newtheorem{corollary}{Corollary}

\newcommand{\cl}[1]{\ensuremath{\overline{#1}}}
\newcommand{\set}[1]{\ensuremath{\{#1\}}}

\newcommand{\E}[1]{\ensuremath{\mathbb{E}\left[ #1 \right]}}
\newcommand{\Prob}[1]{\ensuremath {P\left( #1 \right)}}

\newcommand{\surj}{\ensuremath{\twoheadrightarrow}}


\newcommand{\curly}[1]{\ensuremath{\mathcal #1}}

\newcommand{\ep}{\ensuremath{\epsilon}}

\newcommand{\C}{\ensuremath{\mathbb C}}
\newcommand{\w}{\ensuremath{\omega}}
\newcommand{\W}{\ensuremath{\Omega}}


\newcommand{\Di}[2]{\frac{\partial #2}{\partial #1}}

\newcommand{\DDi}[2]{\frac{\partial^2 #2}{\partial #1^2}}
\newcommand{\twiddle}[1]{\ensuremath{\widetilde{#1}}}


\newcommand{\lr}[1]{\ensuremath{\left( #1 \right)}}


\newcommand{\setst}[2]{\ensuremath{\left\{#1\,\middle|\,#2\right\}}}

\newcommand{\abs}[1]{\left\lvert #1 \right\rvert}
\newcommand{\norm}[1]{\left\lVert#1\right\rVert}

\newcommand{\inprod}[2]{\ensuremath{\left\langle#1,#2\right\rangle}}

\newcommand{\gives}{\ensuremath{\rightarrow}}

\newcommand{\x}{\ensuremath{\times}}

\renewcommand{\Re}{\ensuremath{\mathrm{Re} \ }}

\newcommand{\dell}{\ensuremath{\partial}}
\newcommand{\dellbar}{\ensuremath{\overline{\partial}}}

\DeclareMathOperator{\Cov}{Cov}

\DeclareMathOperator{\Var}{Var}

\DeclareMathOperator{\Div}{Div}
\begin{document}
\author{Boris Hanin}
\address{Department of Mathematics, Northwestern University, 2033 Sheridan Rd. Evanston, IL, 60208}
\email{bhanin@math.northwestern.edu}

\title[Critical Points on a Riemann Surface]{Pairing of Zeros and Critical Points for Random Meromorphic Functions on Riemann Surfaces}

\setcounter{section}{-1}
\maketitle
\begin{abstract}
We prove that zeros and critical points of a random polynomial $p_N$ of degree $N$ in one complex variable appear in pairs. More precisely, suppose $p_N$ is conditioned to have $p_N(\xi)=0$ for a fixed $\xi\in \C\backslash \set{0}.$ For $\ep\in \lr{0,\frac{1}{2}}$ we prove that there is a unique critical point in the annulus $\setst{z\in \C}{N^{-1-\ep}<\abs{z-\xi}< N^{-1+\ep}}$ and no critical points closer to $\xi$ with probability at least $1-O(N^{-3/2+3\ep}).$ We also prove an analogous statement in the more general setting of random meromorphic functions on a closed Riemann surface. 
\end{abstract}
\begin{figure}[h]
\centering
\includegraphics[scale=.45]{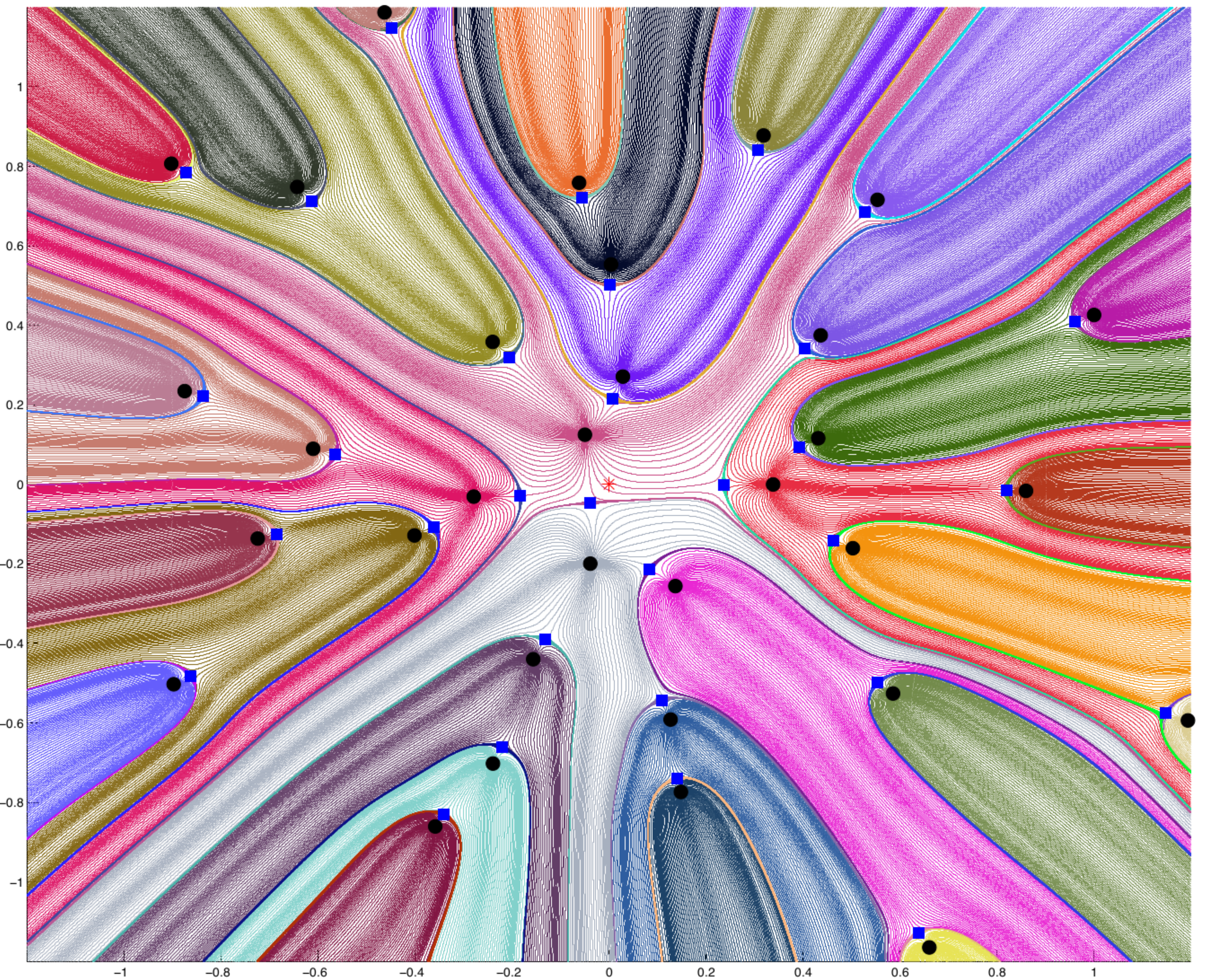}
\label{F:Unscaled SU2 50}
\caption{The zeros (black discs) and critical points (blue squares) of a degree $50$ $SU(2)$ polynomial $p_{50}$ appear in pairs. The typical distance between each pair is on the order of $\frac{1}{50}.$ Each pair lines up with the origin, denoted by a red asterisk. The gradient flow lines for $\log\abs{p_{50}}^2$ are shown.}
\end{figure}

\section{Introduction}
The purpose of this article is to prove that zeros $\set{z_j}$ and critical points $\set{c_j}$ of random meromorphic functions on a Riemann surface come in pairs $(z_j,c_j)$ with $\abs{z_j-c_j}\approx N^{-1},$ where $N$ is the common number of zeros and poles. To explain the result, consider $\curly P_N^{\xi},$ the space of polynomials in one complex variable of degree at most $N$ that vanish at a fixed $\xi \in \C=\C P^1\backslash \set{\infty}.$ We equip $P_N^{\xi}$ with a conditional gaussian measure $\gamma_{h,\xi}^N$ depending on a hermitian metric $h$ on $\mathcal O(1)\surj \C P^1$ (see \S \ref{S:Def} for a precise definition). Consider the random variables
\begin{equation}
\curly N_r:=\#\setst{z\in D_r(\xi)}{\frac{d}{dz}p_N(z)=0},\label{E:Counting Vars}
\end{equation}
where $D_r(\xi)$ is the disk of radius $r,$ and define $e^{-\phi_{z_0}(z)}:=\norm{z_0(z)}_h^2$ for the usual frame $z_0$ of $\mathcal O(1)$ over $\C P^1\backslash \set{\infty}.$
\begin{Thm}\label{T:Polynomial Pairing}
Suppose $d\phi_{z_0}(\xi)\neq 0.$ For $\ep\in (0,\frac{1}{2}),$ write $R_{\pm}=N^{-1\pm \ep}.$ There is $K=K(\ep,h)$ so that
\[\gamma_{h,\xi}^N\lr{\curly N_{R_+}=1\,\,\, \text{and}\,\,\, \curly N_{R_-}=0}\geq 1- K\cdot N^{-3/2+3\ep}.\]
\end{Thm}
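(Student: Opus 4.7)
The plan is to reduce the critical-point count near $\xi$ to a question about zeros of a rescaled gaussian analytic function near a deterministic location, and then to control deviations via a second-moment argument.

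\textbf{Factor and rescale.} Since $p_N(\xi)=0$ with $\xi$ almost surely a simple zero, write $p_N(z) = (z-\xi)\,q(z)$ with $\deg q \le N-1$; under $\gamma_{h,\xi}^N$ the polynomial $q$ inherits a gaussian measure whose covariance is the conditional Bergman kernel (the push-forward of $\gamma_{h,\xi}^N$ under $p_N\mapsto p_N/(z-\xi)$). Because $p_N'(z) = q(z) + (z-\xi)\,q'(z)$, critical points of $p_N$ solve $F(z) := q(z) + (z-\xi)\,q'(z) = 0$. Setting $u := N(z-\xi)$ and $Q(u) := q(\xi + u/N)/A_N$, with $A_N$ the deterministic scale making $\E{|Q(0)|^2}=1$, gives
\[
A_N^{-1}\,F(\xi+u/N) \;=\; Q(u) + u\,Q'(u) \;=\; \bigl(u\,Q(u)\bigr)',
\]
so that the event $\set{\curly N_{R_+}=1,\,\curly N_{R_-}=0}$ becomes the statement that $(uQ)'$ has exactly one zero in $|u|<N^{\ep}$ and no zeros in $|u|<N^{-\ep}$.

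\textbf{The paired zero is essentially deterministic.} A Bergman-kernel computation on the diagonal at $\xi$, together with the hypothesis $d\phi_{z_0}(\xi)\neq 0$, shows that the conditional complex gaussian pair $(Q(0),Q'(0))$ has unit variances and correlation of modulus $1 - O(1/N)$; concretely $Q'(0) = c(\xi)\,Q(0) + W$, with $|c(\xi)|=1$ a deterministic unit complex number essentially given by $\bar\partial\phi_{z_0}(\xi)$ (this is where the nondegeneracy hypothesis is used --- in its absence, e.g.\ at $\xi=0$ for the $SU(2)$ metric, the scaling is different) and $W$ an independent gaussian with $\Var W = O(1/N)$. Consequently the zero of the linearization $L(u) := Q(0) + 2Q'(0)u$,
\[
u^\ast \;:=\; -\frac{Q(0)}{2\,Q'(0)} \;=\; -\frac{1}{2\,c(\xi)} \;+\; O_p\bigl(N^{-1/2}\bigr),
\]
is an order-one \emph{deterministic} point, well inside the annulus $N^{-\ep}<|u|<N^{\ep}$, plus a gaussian fluctuation of size $N^{-1/2}$. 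A local Rouch\'e argument on a shrinking circle centered at $u^\ast$ produces, with probability $1-e^{-cN}$, a unique zero of $(uQ)'$ near $u^\ast$ (the paired critical point), and the event that this zero drifts into the forbidden small disc $|u|<N^{-\ep}$ is exponentially small because $|{-1/(2c(\xi))}|$ is a positive constant.

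\textbf{Excluding extra zeros.} The substance of the theorem is then to rule out any \emph{other} zero of $(uQ)'$ in the large disc $|u|<N^{\ep}$. I would do this with a conditional Kac--Rice second-moment estimate: the unconditional Kac--Rice density of zeros of $F$ in the $u$-variable is of order $1/N$ per unit area (obtained from the Bergman asymptotics, which remain valid uniformly on $|u|\le N^{\ep}$ precisely because $\ep<1/2$ keeps us strictly below the correlation scale $N^{1/2}$). Conditioning on the paired zero at $u^\ast$ introduces the usual quadratic repulsion factor of order $|u-u^\ast|^2$ between zeros of a gaussian analytic function. Integrating the resulting conditional intensity over $|u|<N^{\ep}$ and carefully tracking the powers of $N$ --- in particular noting that the sharp exponent $-3/2+3\ep$ equals $3(\ep-1/2)$, i.e.\ the cube of the ratio $R_+/R_{\mathrm{corr}}$ between the target scale $R_+ = N^{-1+\ep}$ and the correlation length $R_{\mathrm{corr}}\sim N^{-1/2}$ --- yields the bound $K\cdot N^{-3/2+3\ep}$ claimed in the statement, sharper than the naive first-moment bound $N^{-1+2\ep}$.

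\textbf{Main obstacle.} The principal technical difficulty is the simultaneous uniform control of the conditional Bergman asymptotics, of the Kac--Rice intensity of extra zeros, and of the Rouch\'e remainder on the possibly large disc $|u|\le N^{\ep}$. As $\ep\uparrow 1/2$ the boundary $|u|=N^{\ep}$ approaches the correlation scale $N^{1/2}$ and one cannot work with just the leading-order scaling limit of $Q$; sufficiently many terms in the Bergman expansion have to be retained, and matched against the quadratic repulsion factor $|u-u^\ast|^2$ and the bounded-density behavior of $u^\ast$, so that the three contributions combine to give exactly the exponent $-3/2+3\ep$ rather than any of the cruder exponents one gets from taking the worst single failure mode. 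This balancing --- analytic (sup-norm control on $|u|\le N^{\ep}$) against probabilistic (second-moment Kac--Rice with repulsion) --- is the technical heart of the argument.
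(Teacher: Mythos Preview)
Your approach is plausible but takes a genuinely different route from the paper. The paper never factors $p_N$, never linearizes to locate $u^\ast$, and never invokes Rouch\'e or Kac--Rice. Instead it treats $\mathcal N_r$ directly as a counting random variable and applies Chebyshev after computing its first two moments. Via Poincar\'e--Lelong and Stokes, $\mathcal N_r$ is written as a boundary integral of $\partial_z\log|\partial_z p_N|^2$; taking expectations commutes the integral with the gaussian average and reduces $\E{\mathcal N_r}$ to a boundary integral of $\partial_z\log\bigl(\partial_z\partial_{\bar w}\Pi^\xi_{\sigma_N}\bigr)$, which the off-diagonal Bergman asymptotics evaluate explicitly. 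The variance is handled by the bilinear trick $\E{\log|\langle a,u\rangle|\log|\langle a,v\rangle|}=G(|\langle u,v\rangle|)$, so that $\Var[\mathcal N_r]$ becomes a double boundary integral of $\partial_z\partial_w G(P_N^\xi)$ in the normalized conditional kernel $P_N^\xi$, and the near-diagonal expansion of $P_N^\xi$ gives the $N^{-3/2+3\ep}$ bound directly. The exponent thus falls out of a single variance integral rather than from balancing a Rouch\'e remainder against a conditional repulsion factor.

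What your approach buys is a transparent geometric picture: the linearization identifies the paired critical point as sitting at an essentially deterministic location $-1/(2c(\xi))$ in the $u$-variable, which explains both the $N^{-1}$ distance and the alignment with the origin that the paper only motivates heuristically via electrostatics. What the paper's approach buys is economy: once the Bergman asymptotics are in hand, the mean and variance computations are short and require no conditioning on a random point $u^\ast$, no sup-norm control of $Q$ on growing discs, and no separate treatment of ``the paired zero'' versus ``extra zeros.'' Two cautions about your sketch: the claim of probability $1-e^{-cN}$ for the Rouch\'e step is stronger than anything needed or proved (the paper's bounds are all polynomial in $N$), and your derivation of the exponent $-3/2+3\ep$ from a conditional Kac--Rice with quadratic repulsion is only heuristic---making it rigorous would essentially reproduce the two-point correlation computation that the paper carries out more cleanly through the $G(P_N^\xi)$ formula.
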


\begin{figure}[h]
\centering
\includegraphics[scale=.45]{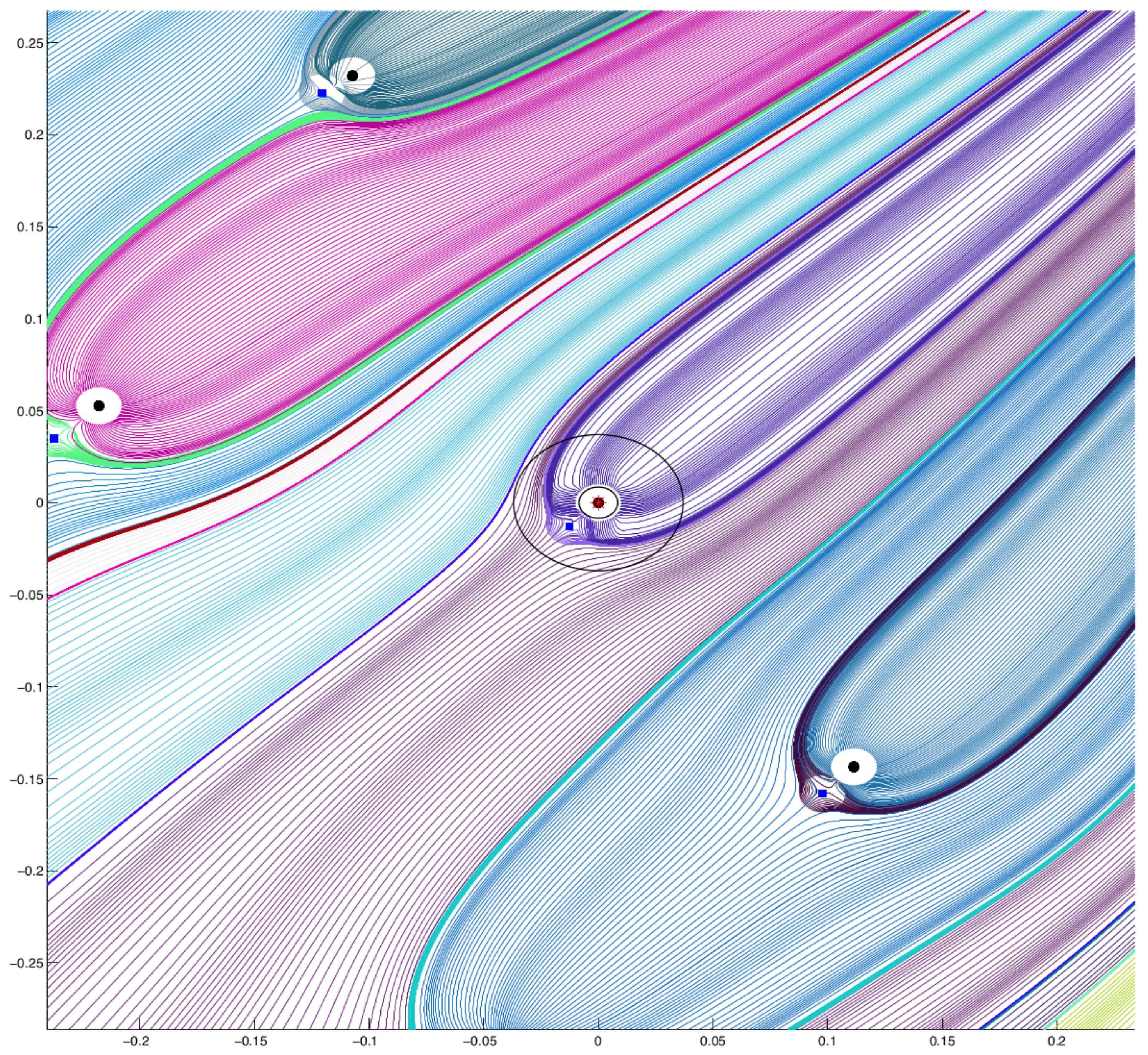}
\label{F:SU2 50 Annulus}
\caption{Zeros (black discs) and holomorphic critical points (blue squares) for an $SU(2)$ polynomial $p_{50}$ of degree $50$ conditioned to have a zero at $\xi=1+1i$ (denoted by a red asterisk) are drawn in normal coordinates centered at $\xi.$ The annulus with inner radius $N^{-1-1/10}$ and outer radius $N^{-1+1/10},$ which Theorem \ref{T:Polynomial Pairing} perdicts to have a unique critical point is shown.}
\end{figure}

\noindent The pairing of zeros and critical points is illustrated in Figures 1 and 2. Typical nearest neighbor distance for $N$ i.i.d points on $\C P^1$ is $N^{-1/2}.$ We give a heuristic derivation of the much smaller $N^{-1}$ distance in Theorem \ref{T:Polynomial Pairing} in terms of electrostatics on a Riemann surface in \S\ref{S:Electrostatics}. In this paper, we focus on understanding the distance from a fixed zero to the nearest critical point for a random polynomial (or more generally meromorphic function on a Riemann surface). In \S \ref{S:Electrostatics} we also give a heuristic explanation for why paired zeros and critical points line up with the origin in Figure 1 and leave a rigorous characterization to future work (cf also Theorem 2 in \cite{ZC}).

\subsection{Riemann Surfaces}
Zeros and critical points of random meromorphic functions on a closed Riemann surface $\Sigma$ also come in pairs. To study this situation we replace $\curly P_N^{\xi}$ with $H_N^{\xi},$ the space of sections of a very ample line bundle $L^{\otimes N}\surj \Sigma$ that vanish at $\xi \in \Sigma.$ We generalize $\frac{d}{dz}$ by fixing an arbitrary section $\sigma_N\in H_N=H_{hol}^0(\Sigma, L^N)$ and defining the meromorphic connection $\nabla^{\sigma_N}$ on $L^N$ via $\nabla^{\sigma_N}\sigma_N=0.$ The critical points of a section $s_N\in H_N^{\xi}$ are thus given by 
\begin{equation}
\nabla^{\sigma_N}s_N(z)=0.\label{E:Crit Equation}
\end{equation}
\noindent The derivative $\frac{d}{dz}=\nabla^{z_0^{\otimes N}}$ on $\mathcal O(N)$ is of this form. For more on meromorphic connections see \S \ref{S:Connection}. Defining the meromorphic function $\gamma_N(z):=\frac{s_N(z)}{\sigma_N(z)}$ on $\Sigma,$ we see that 
\[\nabla^{\sigma_N}s_N(z)=0\quad \longleftrightarrow \quad d\gamma_N(z)=0.\]
$d\gamma_N(z)=0$ is equivalent to $d\log \abs{\gamma_N(z)}=0$ if $\gamma_N$ has simple zeros. We may therefore view $\abs{\gamma_N}$ as a Coulomb potential on $\Sigma$ and interpret the critical point equation $d\log \abs{\gamma_N(z)}=0$ as points of equilibrium for the electric field on $\Sigma$ generated by charge distributed according to the divisor of $\gamma_N.$ This perspective is developed in \S \ref{S:Electrostatics}. 

We emphasize that our notion of critical point is purlely holomorphic and results in a completely different theory from critical points computed with respect to smooth metric connections studied in \cite{VacuaI, VacuaII, VacuaIII, Renjie, CritValues}. We refer the reader to \S \ref{S:Two Connections} for a discussion of this point.

\subsection{Definition of Hermitian Guassian Ensembles}\label{S:Def}
The ensembles of random sections we study are called Hermitian Gaussian Ensembles. They were first studied by Bleher, Shiffman and Zelditch in \cite{PLL, Universality, Quantum, OffDiag}. Let $h$ be a smooth positive Hermitian metric on an ample holomorphic line bundle $L\surj \Sigma$ over a closed Riemann surface. We recall the definition of the Hermitian Gaussian ensemble associated to $h.$ A random section of $L^N$ from this ensemble is
\[s_N:=\sum_{j=0}^N a_j S_j,\]
where $a_j\sim N(0,1)_{\C}$ are i.i.d. standard complex Gaussians and $\{S_j\}_{j=0}^N$ is any orthonormal basis for $H_N$ with respect to the inner product
 \begin{equation}
  \label{E:Inner Product}
\inprod{s_1}{s_2}_h:=\int_{\Sigma}h^N(s_1(z),s_2(z))\, \w_h(z),\quad s_1,s_2\in H_N.
\end{equation}
Here $\w_h:=\frac{i}{2\pi}\partial \cl{\partial} \log h^{-2}$ is the curvature of $(L,h).$ We will write $\gamma_h^N$ for the law of $s_N$ and abbreviate $s_N\in HGE_N(L,h).$ In this paper, we focus on the following variant of $HGE_N(L,h):$ 
\begin{definition}\label{D:Conditional Ensemble}
For $\xi \in \Sigma$ fixed, we will write $s_N\in HGE_N^{\xi}(L,h)$ if the law of $s_N$ is the standard gaussian measure on 
\[H_N^{\xi}=\setst{s_N\in H_{hol}^0(\Sigma, L^N)}{s_N(\xi)=0}\]
generated by the restriction of the inner product (\ref{E:Inner Product}) to $H_N^{\xi}.$ 
\end{definition}
\noindent We denote by $\gamma_{h,\xi}^N$ the law of $s_N\in HGE_N^{\xi}(L,h)$ and will write $s_N(z)=\sum_{j=1}^{d_N-1} a_j S_j^{\xi}(z),$ where $a_j$ are iid standard complex guassians and $\set{S_j^{\xi},\,\, j=1,\ldots, d_N-1}$ is any orthonormal basis for $H_N^{\xi}$ with respect to the inner product (\ref{E:Inner Product}). Every $s_N\in HGE_N^{\xi}(L,h)$ satisfies $s_N(\xi)=0$ and may equivalently be defined as 
\[s_N=\E{\twiddle{s}_N\,|\, ev_{\xi}(s_N)=0},\]
where $\twiddle{s}_N\in HGE_N(L,h)$ and $ev_{\xi}:H_{hol}^0(\Sigma, L^N)\gives L^N|_{\xi}$ is the evaluation map at $\xi.$ See \S 3 of \cite{Conditional} for more details.

\section{Main Result}
\noindent Theorem \ref{T:Pairing} is our main result. We will need the following definition
\begin{definition}\label{D:Preferred Potential}
For $(L,h)\surj \Sigma$ as above and $\sigma_N\in H_{hol}^0(L^N),$ define $\phi_{\sigma_N}:\Sigma\gives (-\infty, \infty]$ by
\[\phi_{\sigma_N}:=\log \norm{\sigma_N}_{h^N}.\]
\end{definition}
\noindent Let $(L,h)\surj \Sigma$ be as above and fix $\Gamma\in [0,\frac{1}{2})$ as well as $\sigma_N\in H_{hol}^0(L^N)$ and $\xi\in \Sigma\backslash\set{\sigma_N =0}$ satisfying 
\begin{equation}
\abs{d\phi_{\sigma^N}(\xi)}> C\cdot N^{1-\Gamma}\label{E:Efield Condition}
\end{equation}
for some $C>0$ and all $N$ (cf \S \ref{S:Kahler Potential} for the electorstatic interpretation of (\ref{E:Efield Condition})).
\begin{Thm}\label{T:Pairing}
Suppose $s_N\in HGE_N^{\xi}(L,h).$ For each $\ep\in (0,\frac{1}{2}-\Gamma)$ define $R_{\pm}=N^{-1+\Gamma \pm \ep}.$ Then there exists a constant $K=K(\Sigma, L, h, \Gamma, \ep),$ such that for each $N\geq 1$ 
\begin{equation}
\gamma_{h,\xi}^N\lr{\curly N_{R_-}=0\quad \text{and}\quad \curly N_{R_+}=1}\geq 1- K\cdot N^{-3/2+2\Gamma+3\ep}.\label{E:FIxed N Pairing}
\end{equation}
 In the definition of $\curly N_r,$ the disk $D_r$ is computed in K\"ahler normal coordinates centered at $\xi.$
\end{Thm}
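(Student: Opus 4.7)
The plan is to reduce Theorem~\ref{T:Pairing} to a Rouch\'e estimate for the holomorphic function $\gamma_N'(z)$, where $\gamma_N:=s_N/\sigma_N$ is read in K\"ahler normal coordinates centered at $\xi$. In such coordinates, with a local holomorphic frame for $L^N$, the critical equation $\nabla^{\sigma_N}s_N=0$ becomes $\gamma_N'(z)=0$, and Taylor-expanding $\gamma_N(z)=A_1z+A_2z^2+A_3z^3+\cdots$ gives $\gamma_N'(z)=L(z)+\rho_N(z)$, where $L(z):=A_1+2A_2z$ is the linear truncation and $\rho_N(z):=\sum_{k\geq 3}kA_kz^{k-1}$ the higher-order remainder. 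The unique zero of $L$ is $z^\ast:=-A_1/(2A_2)$, so by Rouch\'e's theorem the conclusion follows from the two deterministic statements $R_-<|z^\ast|<R_+$ and $|L(z)|>|\rho_N(z)|$ on the circles $|z|=R_\pm$.

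To control both, I would compute the joint complex Gaussian law of the coefficients $A_k$ using the reproducing kernel $\Pi_N^\xi$ of $H_N^\xi$, obtained from the ambient Szeg\H{o} kernel $\Pi_N$ by projecting out evaluation at $\xi$. Writing $\beta:=\sigma_N'(\xi)/\sigma_N(\xi)$, one has
\[
A_1=\frac{s_N'(\xi)}{\sigma_N(\xi)},\qquad A_2=\frac{s_N''(\xi)}{2\sigma_N(\xi)}-A_1\beta,
\]
and the standard on-diagonal Szeg\H{o} asymptotics give $\mathbb{E}|A_1|^2\asymp N^2$, together with Cauchy-type bounds $|A_k|\lesssim N^{(k+1)/2}$ on a high-probability event (using holomorphy of $\gamma_N$ on a disk around $\xi$ of radius comparable to the distance to the nearest zero of $\sigma_N$). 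The hypothesis $|d\phi_{\sigma_N}(\xi)|\gtrsim N^{1-\Gamma}$ translates, after a harmless change of local frame, into a drift of size $\gtrsim N^{1-\Gamma}$ in the combination $\beta$ (possibly mixed with metric contributions), which for $\Gamma<1/2$ forces the drift term $-A_1\beta$ to dominate the holomorphic-noise term $s_N''(\xi)/(2\sigma_N(\xi))$ inside $A_2$. Consequently $z^\ast=1/(2\beta)+O(N^{-3/2+2\Gamma})$ on the good event, with magnitude $1/(2|\beta|)$ of order $N^{-1+\Gamma}$, sitting well inside $(R_-,R_+)$.

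With these scalings the Rouch\'e check is direct: on $|z|=R_+$ one has $|L(z)|\gtrsim 2|A_2|(R_+-|z^\ast|)\asymp N^{1+\epsilon}$ and $|\rho_N(z)|\lesssim N^2R_+^2=N^{2\Gamma+2\epsilon}$, while on $|z|=R_-$ one has $|L(z)|\gtrsim|A_1|\asymp N^{1-\epsilon}$ and $|\rho_N(z)|\lesssim N^2R_-^2=N^{2\Gamma-2\epsilon}$; the ratio $|\rho_N|/|L|=O(N^{-1+2\Gamma+\epsilon})$ is $o(1)$ by the hypothesis $\Gamma+\epsilon<1/2$. Hence Rouch\'e yields $\mathcal{N}_{R_+}=1$ and $\mathcal{N}_{R_-}=0$ on the good event.

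The probabilistic heart of the argument, and the main obstacle, is to upgrade these typical-size estimates to a quantitative probability bound with the sharp exponent $-3/2+2\Gamma+3\epsilon$. The complement of the good event is a finite union of Gaussian tails for $|A_k|$ being anomalously large (each decaying super-polynomially in $N$) together with Gaussian small-ball events, for which $\Pr(|X-\mu|\leq t)\lesssim t^2/\sigma^2$ for a complex Gaussian of variance $\sigma^2$. The delicate point is that $A_1$ and $A_2$ are strongly correlated through the drift, so the full joint law read off from $\Pi_N^\xi$ must be used in place of independent-Gaussian heuristics; in particular one needs joint small-ball bounds for the pair $\bigl(A_1,\,s_N''(\xi)/(2\sigma_N(\xi))\bigr)$ that simultaneously governs the lower bound on $|L|$ on $|z|=R_-$, the Rouch\'e margin near $|z|=R_+$, and the location of $z^\ast$ within the target annulus. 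A secondary technical nuisance is that $R_+$ may exceed the radius of convergence of the Taylor series at $\xi$, which is handled by subtracting principal parts of $\gamma_N'$ at nearby zeros of $\sigma_N$ before applying Rouch\'e. Optimizing the small-ball factors against the deterministic margins above then yields the stated $O(N^{-3/2+2\Gamma+3\epsilon})$.
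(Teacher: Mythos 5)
Your route is genuinely different from the paper's. The paper never localizes the section pathwise: it writes $\E{\curly N_r}$ as a contour integral via the argument principle (Lemma \ref{L:BK E Formula}) and evaluates it with the asymptotics of Corollary \ref{C:AA Log Cond Bergman Kernel}, bounds $\Var[\curly N_r]$ through the bivariate formula $G(P_N^{\xi})$ of Lemma \ref{L:G} together with the off-diagonal expansion of the normalized conditional Bergman kernel (Corollary \ref{C:AA Normalized Szego}), and concludes by Chebyshev. Your deterministic skeleton is sound as far as it goes: the identities for $A_1,A_2$, the identification of the drift in $A_2$ with the hypothesis (\ref{E:Efield Condition}), the location $z^{\ast}\approx 1/(2\beta)$ of order $N^{-1+\Gamma}$, and the Rouch\'e margins on $|z|=R_{\pm}$ on the good event are all correct, and this gives a clean qualitative proof that the paired critical point sits in the annulus with probability tending to $1$.

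The genuine gap is exactly the step you defer to ``optimizing the small-ball factors'': no mechanism in your scheme produces the exponent $-3/2+2\Gamma+3\ep$, and in fact your own accounting shows it cannot. By your linearization, a critical point lies in $D_{R_-}$ essentially if and only if $|A_1|\lesssim R_-\,\abs{s_N''(\xi)/(2\sigma_N(\xi))}$ (once $|A_1|$ is that small the drift term $-\beta A_1$ in $A_2$ is negligible, since $R_-|\beta|\sim N^{-\ep}$), and since $s_N'(\xi)$ and $s_N''(\xi)$ are nearly independent complex Gaussians of typical sizes $N$ and $N^{3/2}$, the complex-Gaussian small-ball estimate gives this event probability of order $N R_-^2=N^{-1+2\Gamma-2\ep}$ --- and this is two-sided, not just an upper bound, because the critical-point density of the conditional ensemble at $\xi$ is of order $N$ in the scaling limit. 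Since $N^{-1+2\Gamma-2\ep}\gg N^{-3/2+2\Gamma+3\ep}$ whenever $\ep<1/10$, no optimization of small-ball thresholds in a pathwise union bound can reach the stated rate; your final sentence asserts precisely the part of the theorem that carries all the difficulty. The paper's extra factor $N^{-1/2+\ep}$ comes from a mechanism invisible to Rouch\'e-plus-union-bound: in the double contour integral (\ref{E:Var N Formula}) the leading part of the integrand depends only on $\theta_1-\theta_2$ and is annihilated by the oscillatory factor $e^{i(\theta_1+\theta_2)}$, and only remainder terms are estimated. If you pursue your route, either target the weaker exponent your small-ball bound actually yields (of order $N^{-1+2\Gamma+O(\ep)}$, dominated by an extra critical point entering $D_{R_+}$), or explain how to beat the event $\{|A_1|\lesssim R_-\abs{s_N''(\xi)/(2\sigma_N(\xi))}\}$; your estimate of that event's probability is also a consistency check worth running against (\ref{E:FIxed N Pairing}) itself in the regime of small $\ep$.
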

\begin{remark}
  Let $\mu$ be any probability measure on $\prod_{N=1}^{\infty} H_N^{\xi}$ with marginals $\mu_{h,\xi}^N.$ Write $A_{N,\ep}$ for the event $\left\{\curly N_{R_+}=1\,\,\, \text{and}\,\,\, \curly N_{R_-}=0\right\}.$ If $2\Gamma+3\ep< \frac{1}{2},$ then we may apply the Borel-Cantelli Lemma to see that the events $A_{N,\ep}$ occur for all large enough $N$ $\mu-$almost surely. 
\end{remark}

\section{Discussion}
To explain the pairing of zeros and critical points, let us consider a degree $N$ random polynomial drawn from the $SU(2)$ ensemble studied in \cite{ZC, NV, Conditional, SodTiI, SodTiIII, SodTiII}:
\[p_N(z):=\sum_{j=0}^Na_j \sqrt{\binom{N}{j}}\, z^j.\]
Here $a_j$ are iid standard complex gaussian random variables. The law of $p_N$ is $\gamma_{h_{FS}}^N,$ where $h_{FS}$ is the Fubini-Study metric on $\mathcal O(1)\surj \C P^1.$ $\gamma_{h_{FS}}^N$ is the unique centered gaussian on $\curly P_N$ for which the expected empirical measure of zeros is uniform on $\C P^1$ (cf \S1.2 in \cite{SodinZeros}). The zeros and critical points of $p_{50}$ are drawn in Figure 1. The colored lines are gradient flow lines for the random morse function $G_{50}(z)=\log\abs{p_{50}(z)},$ whose local minima and saddle points are the zeros and critical points of $p_{50},$ respectively. There are no local maxima since $G_{50}$ is subharmonic. Flow lines of the same color terminate in the same zero or critical point.

\subsection{Electrostatic Explanation for Pairing of Zeros and Critical Points}\label{S:Electrostatic Interp}  We now explain why most zeros $z_j$ are paired with unique nearby critical points $c_j$. We also explain why $\arg z_j \approx \arg c_j$ and $0<\abs{z_j}-\abs{c_j}\ll1.$ In fact Theorem \ref{T:Polynomial Pairing} shows that $\abs{z_j}-\abs{c_j}\approx N^{-1}.$ 

Let us distribute $N$ positive and $N$ negative charges on $\C P^1$ according to the divisor of $p_N.$ That is, we place $N$ positive delta charges at infinity and a single negative delta charge at each zero of $p_N.$ Write $E_{p_N}(z)\in T_z^*\C P^1$ for the resulting electric field at $z.$ As explained in \S\ref{S:Electrostatics}, the critical point equation $\frac{d}{dz}p_N(z)=0,$ is equivalent to $E_{p_N}(z)=0.$ 

Suppose that $p_N(\xi)=0$ for some $\xi\neq 0.$ The remaining zeros of $p_N$ tend to be uniformly distributed on $\C P^1.$ For $z$ very near $\xi,$ the contribution to $E_{p_N}(z)$ from the remaining zeros is, heuristically, on the order of $N^{1/2}$ by the central limit theorem. To leading order in $N,$ $E_{p_N}(z)$ is thus the deterministic order of $N$ contribution from the $N$ positive delta charges at infinity and the single negative delta charge at $\xi.$ 

The Coulomb force in $1$ complex dimension at distance $r$ decays like $r^{-1}.$ Hence, for a configuration of $N$ positive charges at infinity and one negative charge at $\xi,$ a point of equilibrium for the electric field exists at a point $z$ a distance of order $N^{-1}$ away from $\xi$ in the direction of the line from infinity to $\xi.$ This is the electrostatic explanation for the pairing of zeros and critical points shown in Figures 1 and 2. 

The pairing of zeros and critical points breaks down near the origin (the south pole) in Figure 1 because the electric field from the $N$ positive charges at infinity vanishes at the south pole. Critical points near $\xi=0$ are therefore determined by the locations of zeros with small modulus. 

\subsection{Electrostatics on Riemann Surfaces}\label{S:Electrostatics}
We describe a theory of electrostatics on a closed Riemann surface $\Sigma$ that depends only on its complex structure. We will see that solutions to the critical point equation $d\gamma=0$ for a meromorphic function $\gamma:\Sigma \gives \C P^1$ are precisely points of equilibrium for the electric field on $\Sigma$ from charges distributed according to its divisor 
\[D:=\Div(\gamma)=\sum_{\gamma(z)=0} m(z) \delta_z-\sum_{\gamma(w)=\infty} m(w) \delta_{w}.\]
Here $m(\cdot)$ denotes the order of the relevant zero or pole of $\gamma.$ To begin, observe that $d\gamma=0$ is equivalent to $d\log\abs{\gamma}=0$ as long as $\gamma$ has simple zeros. Let $\Delta=\frac{i}{\pi}\dell\dellbar$ be the Laplacian mapping $\W^0(\Sigma)$ to $\W^{1,1}(\Sigma).$ 
By the Poincar\'e-Lelong formula, $G(z,D):=\log \abs{\gamma(z)},$ solves
\begin{equation}
\Delta G(z,D)=D.\label{E:Laplace Law}
\end{equation}
This is the analog of Poisson's Equation, which says that the Laplacian of the Coulomb potential gives the charge density. 
\begin{definition}
  The \textit{electric co-field at $z$ from charge distribution $D$} is
  \[E_{\gamma}(z):=d G(z,D_{\gamma})\in T_z^*\Sigma.\] 
\end{definition}
Since $\Sigma$ is compact, the equation $\Delta G =f$ has a solution only if $\int_{\Sigma} f =0.$ The price we pay for using only the complex structure of $\Sigma$ to define $E_{\gamma}$ is that we may work only with electrically netural charge distributions. As noted before, the critical point equation $d\gamma(z)=0$ is generically equivalent to $d\log\abs{\gamma(z)}=0$ and hence to $E_{\gamma}(z)=0.$ 

\subsection{Meaning of $\phi_{\sigma_N}$}\label{S:Kahler Potential}
The quantity $d\phi_{\sigma_N}$ (Definition \ref{D:Preferred Potential}) plays a key role in our results. To see why, note that
\[\frac{i}{2\pi}\dell\dellbar \log \norm{\sigma_N(z)}_h^{-2}=N\w_h - Z_{\sigma_N}.\]
As in \S \ref{S:Def}, $\w_h$ is the curvature form of $h$ and $Z_{\sigma_N}$ is the current of integration over the zero set of $\sigma_N.$ The term $N\w_h$ is essentially $\E{Z_{s_N}-\delta_{\xi}}$ for $s_N\in HGE_N^{\xi}(L,h)$ (cf e.g. Theorem 1 in \cite{Conditional}, Lemma 3.1 in \cite{Quantum}, and Lemma 2 in \S 5 of \cite{ZC}). Let us write as in \S \ref{S:Electrostatic Interp} $E_{\frac{s_N}{\sigma_N}}(\xi)$ for the electric field at $\xi$ from charge distributed according to $Z_{s_N}-\delta_{\xi}-Z_{\sigma_N}.$ Then 
\[d\phi_{\sigma_N}(\xi)\approx \E{E_N(\xi)}.\]
The contribution of the random zeros of $s_N$ should heuristically be on the order of $N^{1/2}$ by the central limit theorem. The condition that $\abs{d\phi_{\sigma_N}(\xi)}>N^{1-\Gamma}$ for some $\Gamma<\frac{1}{2}$ is equivalent to asking that the average electric field at $\xi$ be dominated by the deterministic contribution from the charges at $\xi$ and at $Z_{\sigma_N}.$ Points $z\in \Sigma$ for which $d\phi_{\sigma_N}(z)=0,$ for example, play the same role as the origin for the $SU(2)$ ensemble (cf the end of \S\ref{S:Electrostatic Interp}).

\subsection{Smooth Versus Holomorphic Critical Points}\label{S:Two Connections}
The critical points we study solve the equation
\begin{equation}
\nabla^{\sigma_N}s_N(z)=0\quad \longleftrightarrow \quad \frac{d}{dz}\lr{\frac{s_N(z)}{\sigma_N(z)}}=0.\label{E:Hol Crits}
\end{equation}
Smooth critical points (cf e.g. \cite{VacuaI, VacuaII, VacuaIII}), in contrast, are solutions of
\begin{equation}
\nabla^hs_N(z)=0 \quad \longleftrightarrow \quad \frac{d}{dz} \norm{s_N(z)}_h=0,\label{E:Smooth Crits}
\end{equation}
where $\nabla^h$ is the Chern connection of $h.$ The two settings are qualitatively different. For instance, the zeros of $s_N\in HGE_N(L,h)$ repel (cf e.g. the Introductions in \cite{Universality} and\cite{Conditional}). Hence, since zeros and holomorphic critical points tend to apear in pairs, solutions to (\ref{E:Hol Crits}) repel as well. This can be seen directly by computing the two point function for holomorphic critical points, although we do not do this in the present paper. In contrast, Baber in \cite{Baber} showed that smooth critical points of $s_N$ actually attract. Further, the number of holomorphic critical points depends only on $L,$ $N$, and $\sigma_N$ by the Riemann-Roch formula. The number of smooth critical points is, on the other hand, a non-degenerate random variable, whose expected value is $\frac{5N}{3}$ to leading order in $N$ (cf Corollary 5 and \S 6 in \cite{VacuaI}). 

Smooth critical points were implicitly studied in the work of Nazarov, Sodin, and Volberg \cite{Transportation}, where a so-called ``gravitational allocation'' was constructed between the counting measure for zeros of a gaussian analytic function $f(z)$ and Lebesgue measure on $\C.$ The allocation is achieved by gradient flow for the potential $\norm{f(z)}_h^2,$ where $h(z)=\frac{1}{\pi}e^{-\abs{z}^2}$ is the usual hermitian metric on $\C\x \C\surj \C .$ The saddle points for this potential are critical points of $f$ with respect to the Chern connection of $h.$ The analogous gravitational allocation in Figures 1 and 2 uses $\abs{p_N(z)}^2$ as a potential, omitting the smooth metric factor. Finally, we mention that the expected distribution of critical \textit{values} for smooth critical points was worked out in \cite{Renjie} and \cite{CritValues}.

\section{Acknowledgements}
I am grateful to S. Zelditch for many helpful conversations about his work previous work on statistics of zeros and critical points and for his comments on an earlier draft of this paper. I would also like to thank R. Peled for sharing with me M. Krishnapur's code that I modified to generate the figures in this paper. 

\section{Outline}
The remainder of this paper is organized as follows. First, in \S\ref{S:Connection}, we give some background on meromorphic connections. Then, in \S\ref{S:Notation}, we establish notation to be used throughout. In \S\ref{S:BS} we recall relevant facts about Bergman kernels. Namely, in \S \ref{S:Def SZ}-\ref{S:Normalized Szego} we recall their definition and in \S \ref{S:Principal Bundle}-\ref{S:BS Asymptotics} we recall their asymptotic expansions as given by Zelditch and Shiffman in \cite{OffDiag}. Finally, in the appendix, \S\ref{S:BS Derivs Asymptotics}, we derive asymptotics for derivatives of the Bergman kernel. These asymptotics will be the key analytic formulas underlying the proof of Theorem \ref{T:Pairing}, which is given in \S\ref{S:Pairing Proof}.

\section{Meromorphic Connections on $L\surj \Sigma$}\label{S:Connection}
\begin{definition}\label{D:Meromorphic Connection}
A meromorphic connection on $L\surj \Sigma$ is a connection $\nabla$ on $L$ with the following mapping property:
\[\nabla: H_{mer}^0(\Sigma) \gives H_{mer}^0(\Sigma)\otimes_{\mathcal O_{mer}(\Sigma)}\W^{1,0}(\Sigma).\]
\end{definition}

We study critical points of random sections of $L$ and its tensor powers with respect to a special class of meromorphic connections. For $\sigma\in H_{hol}^0(\Sigma, L)$ the equation $\nabla^{\sigma}\sigma =0$ defines the meromorphic connection $\nabla^{\sigma}$ up a constant multiple. For $s\in H_{hol}^0(L)$ 
\[\nabla^{\sigma}s = \nabla^{\sigma}\frac{s}{\sigma}\sigma = d\lr{\frac{s}{\sigma}}\cdot \sigma \in H_{mer}^0(L)\otimes \W^1(\Sigma).\]
This formula shows that $\nabla^{\sigma}$ introduces a pole at each zero of $\sigma$ (not counting multiplicity). 

Meromorphic connections $L$ are natural generalizations of the holomorphic derivative $\frac{d}{dz}.$ Indeed, if we write $z_0$ for the usual frame for $\mathcal O(1)\surj \C P^1\backslash\set{[0:1]}.$ The section $z_0^N$ induces the trivialization 
\[\alpha_N: \mathcal O(N)\big|_{\C P^1\backslash{\set{\infty}}}\stackrel{\cong}{\longrightarrow} \C \x \C,\]
which identifies $H_{hol}^0(\C P^1,\mathcal O(N))$ with $\curly P_N,$ the polynomials of degree up to $N.$ We may define then define a meromorphic connection $\nabla^{z_0^N}:=\alpha_N^* d$ on $\mathcal O(N).$ We note that the section $z_0^N$ corresponds to the constant polynomial $1$ and hence is parallel for $\nabla^{z_0^N},$ in agreement with our earlier notation. Moreover, $z_0$ vanishes only at $[0:1]$ and hence $\nabla^{z_0^N}$ has a simple pole at $[0:1].$ See \S 3 in \cite{ZC} for more details. 

\section{Notation} \label{S:Notation}
\noindent In \S \ref{S:Def}, we wrote $s_N=\sum_{j=1}^{d_N}a_j S_j\in HGE_N(L,h).$ Consider $\Phi_N:\Sigma \gives \C P^{d_N-1}$ defined by
\[\Phi_N(z)=[S_1(z):\cdots :S_{d_N}(z)].\]
We will refer to $\Phi_N$ as the coherent states embedding generated by $h.$ Since $L$ is ample, the space $H_{hol}^0(\Sigma, L^N)$ is basepoint free for $N$ large and hence $\Phi_N$ is well-defined. The map $\Phi_N$ is an almost-isometry (cf \cite{Tian}): 
\[\norm{\w_h-\frac{1}{N}\Phi_N^* \w_{FS}}_{C^{\infty}}=O(N^{-1}).\]
Here $\w_h$ is the curvature form of $h$ and $\w_{FS}$ is the Fubini-Study metric on $\C P^{d_N}.$ This result is Corollary $3$ in \cite{SzegoTian} and was proved independently by Catlin in \cite{Catlin}. We will write
\[s_N=\inprod{a}{\Phi_N}\]
for $a=(a_1,\ldots, a_{d_N})$ a standard complex gaussian vector on $H_{hol}^0(\Sigma, L^N).$ We assume fixed throughout a distinguished section $\sigma_N\in H_{hol}^0(\Sigma, L^N),$ which is parallel for the meromorphic connection $\nabla^{\sigma_N}$ with respect to which we compute critical points. We will write 
\[S_j=f_j\cdot \sigma_N\]
whenever we do local computations. Theorem \ref{T:Pairing} concerns sections $s_N(z)=\sum_{j=1}^{d_N-1}a_j S_j^{\xi}(z)\in HGE_N^{\xi}(L,h),$ where, as before, $\set{S_j^{\xi}}$ is an orthonomal basis for $H_N^{\xi}$ (defined in the Introduction) with respect to the inner product (\ref{E:Inner Product}). As in the unconditional ensemble, we will write
\[s_N(z)=\inprod{a}{\Phi_N^{\xi}(z)},\]
where $\Phi_N^{\xi}:\Sigma \gives \C P^{d_N-2}$ is given in homogeneous coordinates by $\Phi_N^{\xi}(z)=[S_1^{\xi}(z):\cdots : S_{d_N-1}^{\xi}(z)]$ and set $S_N^{\xi}=f_j^{\xi}\cdot \sigma_N.$ Abusing notation, $\Phi_N^{\xi}$ will sometimes denote a map $\Phi_N^{\xi}:\Sigma \gives \C^{d_N-1}$ given by $\Phi_N^{\xi}(z)=\lr{f_1^{\xi}(z),\ldots, f_{d_N-1}^{\xi}(z)}.$ We will refer to $\Phi_N^{\xi}$ as the conditional coherent states embedding generated by $h$ relative to the frame $\sigma_N.$

\section{Bergman and Szeg\"o Kernels}\label{S:BS}
\noindent Let us recall some background on Bergman kernels. In \S\ref{S:Def SZ}, we define the $N^{th}$ Bergman kernel $\Pi_N$ of $(L,h).$ The related conditional Bergman kernel $\Pi_N^{\xi}$ is the covariance kernel of the Gaussian field $s_N\in HGE_N^{\xi}(L,h).$ In \S\ref{S:Normalized Szego}, we introduce the conditional normalized Bergman kernel $P_N^{\xi},$ key in the proof of Theorem \ref{T:Pairing}. Our main technical tool is the $C^{\infty}$ asymptotic expansion for $\Pi_N$ derived by Shiffman and Zelditch in \cite{OffDiag, NV}, which we recall in \S\ref{S:BS Asymptotics}. Off-diagonal Bergman kernel asymptotic expansions are given also in \cite{HolMorse} and in \cite{DiracBergman}. To explain this asymptotic expansion we recall in \S\ref{S:Principal Bundle} the principal $S^1$ bundle $X\twoheadrightarrow \Sigma$ associated to $(L,h)\surj \Sigma.$ The family of kernels $\Pi_N$ are analyzed by lifting to $X,$ where they are naturally interpreted as Sz\"ego kernels. In the appendix, \S\ref{S:BS Derivs Asymptotics}, we derive asymptotic expansions for derivatives of $\Pi_N^{\xi}$ with respect to the meromorphic connection $\nabla^{\sigma_N}.$ 

\subsection{Definition of $\Pi_N$ and $\Pi_{\sigma_N}$}\label{S:Def SZ} We make the following
\begin{definition}\label{D:Szego}
  The covariance kernel for $s_N=\sum_{j=1}^{d_N} a_j S_j\in HGE_N(L,h)$ is called the $N^{th}$ Bergman kernel for $(L, h):$
  \begin{equation}
    \label{E:Szego Def}
    \Pi_N(z,w):=\Cov(p_N(z),p_N(w))=\sum_{j=0}^N S_N^j(z)\otimes \cl{S_N^j(w)}\in H_{hol}^0(\Sigma,L^N\otimes \cl{L^N}).
  \end{equation}
\end{definition}
The family of Bergman kernels $\Pi_N$ is well-understood for a positive holomorphic line bundle $(L,h)\twoheadrightarrow M$ over a compact K\"ahler manifold $M$ (cf \cite{OffDiag, NV}). If we fix a local frame $e$ for $L^N$ and write $S_j=\gamma_j\cdot e,$ then we can make the following
\begin{definition}\label{D:Bergman}
  The $N^{th}$ Bergman kernel for $(L,h)$ relative to the frame $e$ is 
\[\Pi_e(z,w):=\sum_{j=1}^{d_N}\gamma_j(z)\cl{\gamma_j(w)}.\]
\end{definition}
We observe that $\Pi_e(z,w)\cdot e(z)\otimes \cl{e(w)}=\Pi_N(z,w).$ Writing $s_N(z)=\sum_{j=0}^{d_N-1}a_j S_j^{\xi}(z)\in HGE_N^{\xi}(L,h),$ we define the $N^{th}$ conditional Bergman kernel to be
\[\Pi_N^{\xi}(z,w):=\sum_{j=1}^{d_N-1} S_j^{\xi}(z)\otimes \cl{S_j^{\xi}(w)}.\]
Similarly, writing $S_j^{\xi}=\gamma_j^{\xi} \cdot e$ for any frame $e$ of $L^N,$ we define the $N^{th}$ conditional Bergman kernel relative to the frame $e$ to be
\[\Pi_e^{\xi}:= \sum_{j=1}^{d_N-1} \gamma_j^{\xi}(z)\cl{\gamma_j^{\xi}(w)}\]
and note that
\begin{equation}
\Pi_N^{\xi}(z,w)=\Pi_e^{\xi}(z,w)\cdot e(z)\otimes \cl{e(w)}.\label{E:BS Rln}
\end{equation}

\subsection{Normalized Bergman Kernel}\label{S:Normalized Szego} The local statistics of the critical points for $s_N\in HGE_N^{\xi}(L,h)$ are conveniently expressed in terms of the normalized Bergman kernel for the conditional ensemble $HGE_N^{\xi}(L,h)$ (cf \cite{ZC, NV, Conditional}):
\begin{equation}
  \label{E:Normalized Szego Def I}
P_N^{\xi}(z,w):=\frac{\norm{\nabla^{\sigma_N}\otimes \cl{\nabla^{\sigma_N}} \Pi_N^{\xi}(z,w)}_{h^N}}{\sqrt{\norm{\nabla^{\sigma_N}\otimes \cl{\nabla^{\sigma_N}}\Pi_N^{\xi}(z,z)}_{h^N}\norm{\nabla^{\sigma_N}\otimes  \cl{\nabla^{\sigma_N}}\Pi_N^{\xi}(w,w)}_{h^N}}}.
\end{equation}
The analysis of $P_N^{\xi}$ will play a crucial role in the proof of Theorem \ref{T:Pairing}. Probabilitistically, $P_N^{\xi}(z,w)$ is the correlation between the random variables $\nabla^{\sigma_N} s_N(z)$ and $\nabla^{\sigma_N}s_N(w)$ for $s_N\in HGE_N^{\xi}(L,h).$

\subsection{Principal $S^1$ Bundle}\label{S:Principal Bundle}
Consider a positive line bundle $(L,h)\surj M$ over a compact K\"aher manifold and an orthonormal basis $\set{S_j}_{j=0}^{d_N}$ for $H_{hol}^0(L^N)$ with respect to the inner product (\ref{E:Inner Product}). The $N^{th}$ Bergman kernel $\Pi_N(z,w)=\sum_{j=0}^{d_N} S_j(z)\otimes \cl{S_j(w)}$ is studied in \cite{OffDiag} by lifting sections $s\in H_{hol}^0(L^{\otimes N})$ to $S^1$-equivariant functions on the principal $S^1$ bundle associated to $(L,h),$ where the parametrix construction of Botet de Monvel and Sj\"ostrand in \cite{BMS} may be applied. More precisely, we write $h^*$ for the dual metric on the dual bundle $L^*$ and define the principle $S^1$ bundle $X\surj M$ by
\[X:=\setst{v\in L^*}{\norm{v}_{h^*}=1}.\]
We denote by $\widehat{s}$ the lift of a section $s$ to the function $\widehat{s}(v):=v^{\otimes N}(s)$ on $X.$ Writing $s=f\cdot e^{\otimes N}$ for a local frame $e$ of $L,$ and using $e^*$ to trivialize $X,$ we may write
\begin{equation}
  \label{E:Section Lift}
\widehat{s}(\theta,z):=e^{iN\theta}\norm{e(z)}_h^N\cdot f(z).
\end{equation}
Observe that 
\begin{equation}
  \label{E:Section Drop}
  \abs{\widehat{s}(\theta,z)}=\norm{s(z)}_{h^N}.
\end{equation}
The lifted Bergman kernel is then 
\[\widehat{\Pi}_N(\alpha, z; \beta, w):=\sum_{j=0}^N \widehat{S_j}(\alpha, z)\cl{\widehat{S_j}(\beta, w)}.\]
We observe that $\widehat{\Pi}_N$ is the Szeg\"o kernel for the Hardy space of $X.$ See \S1.2 of \cite{OffDiag} for further details. In this paper, we are interested in the special case $M=\Sigma,$ a closed Riemann surface. The following two definitions from \S2.2 of \cite{NV} allow us to formulate the $C^{\infty}$ complete asymptotic expansion for $\widehat{\Pi}_N$ derived there. 
\begin{definition}\label{D:Preferred Frame}
 Fix $\xi\in \Sigma$ and $e$ a frame for $L$ in a neighborhood $U$ containing $\xi.$ The frame $e$ is called a preffered frame for $h$ at $\xi$ if in a K\"ahler normal coordinate $z:U\gives \C$ centered at $\xi,$ we have
\[\norm{e(z)}_h=1-\frac{1}{2}\abs{z}^2+o(\abs{z}^2).\]
\end{definition}
\begin{definition}\label{D:Heisenberg Coords}
Fix $\xi \in \Sigma,$ a K\"ahler normal coordinate $\psi:U\gives \C$ centered at $\xi,$ and a preferred frame $e$ for $h$ at $\xi.$ Denoting by $\pi$ the projection map $\pi: X\surj M,$ a Heisenberg coordinate on $X$ centered at $\xi$ is a coordinate $\rho:S^1\x \C \gives \pi^{-1}(U)$ given by
\begin{equation}
  \label{E:Heisenberg Def}
\rho(\theta, \psi(z))=e^{i\theta}\norm{e(z)}_h e^*(z).
\end{equation}
\end{definition}
A Heisenberg coordinate on $X$ is therefore the choice of a K\"ahler normal coordinate on $\Sigma$ centered at $\xi$ and a trivialization of $X$ by a preferred frame at $\xi.$ The role of Heisenberg coordinates is that in these special local coordinates, the Szeg\"o kernels $\widehat{\Pi}_N$ have a universal scaling limit depending only on $\dim_{\C}M.$ We refer the interested reader to \S1.3.2 of \cite{Universality} for more details. 

\subsection{Asymptotic Expansion for $\Pi_N$}\label{S:BS Asymptotics}
We now recall for the particular case of $L\surj \Sigma$ the on-diagonal, near off-diagonal, and far off-diagonal asymptotics for the Szeg\"o kernels $\Pi_N$ derived in \cite{OffDiag} and \cite{NV} by Shiffman and Zelditch. We note that the on-diagonal asymptotics were obtained also by Catlin in \cite{Catlin} off-diagonal expansions appeared in \cite{DiracBergman, HolMorse}. The following is a special case of Theorem 2.4 from \cite{NV}.
\begin{Thm}\label{T:Szego Asymptotics} Fix Heisenberg coordinates on $X$ around $\xi\in \Sigma.$ Suppose $b>\sqrt{j+2k},~j,k\geq 0:$ \\
{\bf 1. Far Off-Diagonal. } For $d(z,w)>b\left(\frac{\log N}{N}\right)^{1/2}$ and $j\geq 0,$ we have 
  \begin{equation}
    \label{E:Szego Far Off-Diag}
\nabla^j \widehat{\Pi}_N(\alpha, z; \beta, w)=O(N^{-k}),    
  \end{equation}
where $\nabla^j$ denotes the horizontal lift to $X$ of any $j$ mixed derivatives in $z,\cl{z},w,\cl{w}.$\\
{\bf 2. Near Off-Diagonal. } Let $\ep>0.$ In Heisenberg coordinates (see Definition \ref{D:Heisenberg Coords}) centered at $\xi,$ we have for $\abs{z}+\abs{w}<b\left(\frac{\log N}{N}\right)^{1/2}$ 
  \begin{equation}
    \label{E:Szego Near Off-Diag}
\widehat{\Pi}_N\left(\alpha, z;\beta,w\right)=e^{iN(\alpha-\beta)-z\cdot \cl{w}+\frac{1}{2}\left(\abs{z}^2+\abs{w}^2\right)}[1+R_N(z,w)],    
  \end{equation}
where
\begin{equation}\label{E:Remainder Term}
\nabla^j R_N(z,w)=O(N^{-1/2+\ep}),
\end{equation}
and the implied constant in equation (\ref{E:Remainder Term}) is allowed to depend on $\ep.$ The remainder $R_N$ satisfies in addition, for $j=0,1,2$
\begin{align}\label{E:Remainder Estimate Near-Diag}
  \abs{\nabla^j R_N(z,w)}=O(\abs{z-w}^{2-j}N^{-1/2+\ep})
\end{align}
uniformly for $\abs{z}+\abs{w}<\left(\frac{\log N}{N}\right)^{1/2}$ with the implied constants are independent of $N.$
\end{Thm}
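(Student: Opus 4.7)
The plan is to reduce everything to an oscillatory integral representation of the lifted Bergman kernel via the Boutet de Monvel--Sj\"ostrand parametrix, and then perform a stationary phase analysis tuned to the Heisenberg coordinates. Since $L$ is a positive line bundle, the disc bundle of $L^*$ is strictly pseudoconvex, so the Szeg\H{o} projector $\Pi:L^2(X)\to H^2(X)$ is a Fourier integral operator with complex phase whose kernel admits a representation
\[
\Pi(x,y)\;\sim\;\int_0^\infty e^{it\psi(x,y)}\,s(x,y,t)\,dt
\]
modulo smoothing, where $\Im \psi\geq 0$ vanishes precisely on the diagonal and $s$ is a classical symbol of order $n=1$. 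The $N$-th Fourier mode $\widehat{\Pi}_N(\alpha,z;\beta,w)$ is extracted by averaging against $e^{-iN(\alpha-\beta)}$ over the $S^1$ fibers; after the rescaling $t\mapsto Nt$ this becomes an oscillatory integral with large parameter $N$ and complex phase $\Psi(z,w,t)=t\psi$.

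\textbf{Near off-diagonal regime.} In Heisenberg coordinates the phase $\Psi$ has a unique nondegenerate complex critical point in $t$, with critical value exactly $-z\cdot\bar w+\tfrac12(|z|^2+|w|^2)$ (plus the $S^1$ term $e^{iN(\alpha-\beta)}$). Applying the Melin--Sj\"ostrand complex stationary phase theorem yields the leading-order term in equation (\ref{E:Szego Near Off-Diag}); the amplitude $s$ is normalized so that the leading coefficient is $1$, which is exactly what the choice of preferred frame and K\"ahler normal coordinates guarantees. The subsequent terms of the stationary phase expansion give a full asymptotic series $1+\sum_{k\geq 1}N^{-k}b_k(z,w)$, proving (\ref{E:Remainder Term}) once one remembers that in the strip $|z|+|w|<b(\log N/N)^{1/2}$ each $b_k$ is bounded by $N^{\epsilon}$. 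The refined estimate (\ref{E:Remainder Estimate Near-Diag}) is obtained by showing the sub-leading amplitudes $b_k$ vanish to second order along the diagonal $z=w$, so that $|b_k(z,w)|\lesssim|z-w|^2$ and each derivative loses at most one power of $|z-w|$.

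\textbf{Far off-diagonal regime.} When $d(z,w)>b(\log N/N)^{1/2}$, the phase $\Psi$ is non-stationary and $\Im\Psi$ at its critical point is bounded below by a multiple of $d(z,w)^2$. Hence the oscillatory integral is bounded by
\[
C\,N\,e^{-cNd(z,w)^2}\;\leq\;C\,N\,e^{-cb^2\log N}\;=\;C\,N^{1-cb^2}.
\]
Each application of a horizontal derivative $\nabla$ produces at most a factor $N^{1/2}$ from differentiating $\Psi$, so choosing $b>\sqrt{j+2k}$ absorbs the derivatives and yields the $O(N^{-k})$ bound in (\ref{E:Szego Far Off-Diag}).

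\textbf{Main obstacle.} The delicate step is the refined near-diagonal remainder estimate (\ref{E:Remainder Estimate Near-Diag}), especially the fact that the error must vanish to order $|z-w|^{2}$ before derivatives are taken. This requires more than a generic stationary phase bound: one must identify the precise algebraic structure of the sub-leading symbols produced by the Boutet de Monvel--Sj\"ostrand construction, and verify that Hermitian symmetry of the Szeg\H{o} kernel together with the Hermitian nature of the parametrix forces these symbols to vanish to second order on the diagonal. The other, essentially bookkeeping, obstacle is to organize the derivative estimates so that each differentiation of either the phase or the amplitude is accounted for in the final powers of $N$ and of $|z-w|$.
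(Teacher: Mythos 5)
You should first note that the paper contains no proof of Theorem \ref{T:Szego Asymptotics} to compare against: it is quoted verbatim as a special case of Theorem 2.4 of \cite{NV}, with the underlying machinery coming from \cite{BMS, OffDiag}. Your outline---lifting to the principal $S^1$ bundle, the Boutet de Monvel--Sj\"ostrand parametrix for the Szeg\H{o} projector, extraction of the $N$-th Fourier mode, and Melin--Sj\"ostrand complex stationary phase in Heisenberg coordinates, plus the Gaussian off-diagonal decay bookkeeping that produces the condition $b>\sqrt{j+2k}$---is exactly the route taken in those cited works, so in spirit you are reconstructing the source's argument rather than inventing a new one.

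As a proof, however, there is a genuine gap at precisely the point you flag as the ``main obstacle.'' First, a structural inaccuracy: in the scaled (Heisenberg) variables the stationary phase expansion is in half-integer powers $N^{-r/2}$ with coefficients that are polynomials in $z,\cl{z},w,\cl{w}$ of growing degree; writing it as $1+\sum_{k\geq1}N^{-k}b_k$ misstates this and would yield an error $O(N^{-1+\ep})$, whereas the bound (\ref{E:Remainder Term}) is $O(N^{-1/2+\ep})$ exactly because the first correction occurs at order $N^{-1/2}$. Second, and more seriously, the mechanism you propose for (\ref{E:Remainder Estimate Near-Diag})---that Hermitian symmetry of the kernel forces the subleading amplitudes to vanish to second order on the diagonal---does not do the job: Hermitian symmetry only gives $\cl{R_N(z,w)}=R_N(w,z)$, hence reality and symmetry of the diagonal values, not vanishing, so no factor of $\abs{z-w}^{2-j}$ follows from it. Establishing (\ref{E:Remainder Estimate Near-Diag}) requires explicitly identifying the first scaled correction term in K\"ahler normal coordinates with a preferred frame and Taylor-expanding the remainder about the diagonal, as is done in \cite{NV}; this is not optional decoration, since it is exactly the ingredient used in Corollary \ref{C:AA Normalized Szego} (estimates (\ref{E:DZ Lambda})--(\ref{E:Den})) and hence in the variance bound of Lemma \ref{L:Variance}. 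So the framework is correct, but the quantitative statement the paper actually relies on is asserted rather than proved in your proposal.
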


\section{Proof of Theorem \ref{T:Pairing}}\label{S:Pairing Proof}
We first recall the notation. Let $\sigma_N\in H_{hol}^0(\Sigma, L^N)$ be fixed, and define
\[\phi_N(z)=\log \norm{\sigma_N}_h^{-2}.\]
Consider $\xi \in \Sigma\backslash \set{\sigma_N=0}$ such that $\abs{d\phi_N(\xi)}>C\cdot N^{1-\Gamma}$ for a fixed $\Gamma \in [0,1/2)$ and a universal constant $C.$ In a K\"ahler normal coordinate around $\xi,$ we wrote 
\[\curly N_r:=\#\setst{w\in D_{r\cdot N^{-1/2}}(\xi)}{\nabla^{\sigma_N} s_N(w)=0},\]
where $D_R(\xi)$ is the disk of radius $R$ in our fixed coordinate system. We fix an $\ep>0$ such that $\Gamma+\ep<1/2$ and abbreviate $R_\pm:= N^{-1/2+\Gamma\pm \ep}.$ The conclusion of Theorem \ref{T:Pairing} follows easily from Lemmas \ref{L:Expectation} and \ref{L:Variance}, which we prove in Sections \ref{S:Expected Value Comp} and \ref{S:Variance Comp}, respectively.

\begin{Lem}\label{L:Expectation}
Fix $\ep>0$ such $\Gamma + \ep <1/2.$ We have 
\begin{equation}
\E{\curly N_{R_+}}=1+O(N^{-\ep}).\label{E:E Upper Bound}
\end{equation}

Similarly, 
\begin{equation}
\E{\curly N_{R_-}}=O(N^{-\ep}).\label{E:E Lower Bound}
\end{equation}
The implied constants in $O(N^{-\ep})$ depend only on $\ep.$
\end{Lem}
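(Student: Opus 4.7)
The plan is to apply the Edelman--Kostlan / Kac--Rice formula to the derivative of the conditional Gaussian field and to evaluate the resulting integral via the Bergman kernel asymptotics of Theorem \ref{T:Szego Asymptotics}. Fix K\"ahler normal coordinates on a neighbourhood $U$ of $\xi$ and a preferred frame $e$ for $h$ at $\xi$, and write $s_N=\tilde f\cdot e^{\otimes N}$ and $\sigma_N=\tilde\sigma\cdot e^{\otimes N}$ with $\tilde\sigma(\xi)\neq 0$. Then $F_N(z):=\tilde f(z)/\tilde\sigma(z)$ is a centred Gaussian analytic function on $U$ satisfying $F_N(\xi)=0$ almost surely, and $\nabla^{\sigma_N}s_N(z)=0$ if and only if $F_N'(z)=0$. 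Applying Edelman--Kostlan to $F_N'$,
\begin{equation}\label{E:KRin}
\E{\curly N_r}=\frac{1}{\pi}\int_{D_{rN^{-1/2}}(\xi)}\dell_z\dellbar_z\log M_N(z)\,dA(z),\qquad M_N(z):=\E{\abs{F_N'(z)}^2},
\end{equation}
and Stokes' theorem converts \eqref{E:KRin} to a boundary integral of $\dellbar\log M_N$ over $\partial D_{rN^{-1/2}}(\xi)$, a form amenable to pointwise asymptotic analysis.

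The second step is to expand $M_N$. Its defining covariance is $K_N(z,w):=\Pi_e^\xi(z,w)/\bigl(\tilde\sigma(z)\overline{\tilde\sigma(w)}\bigr)$, so $M_N(z)=\dell_z\dellbar_w K_N(z,w)\bigr|_{w=z}$. Using $\Pi_N^\xi(z,w)=\Pi_N(z,w)-\Pi_N(z,\xi)\Pi_N(\xi,w)/\Pi_N(\xi,\xi)$, the near off-diagonal expansion \eqref{E:Szego Near Off-Diag}, and the derivative Bergman kernel asymptotics of the Appendix \S\ref{S:BS Derivs Asymptotics}, one finds after Taylor expanding $\phi_N$ and $\tilde\sigma$ about $\xi$ that on the disk $\abs{z-\xi}\lesssim\sqrt{\log N/N}$ the leading behaviour of $\log M_N$ is governed by two competing deterministic factors: a "virtual pole" at $\xi$ coming from the hard constraint $s_N(\xi)=0$, and a linear factor in $z-\xi$ coming from the drift $d\phi_N(\xi)$ of size $N^{1-\Gamma}$. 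Consequently the density $\dell_z\dellbar_z\log M_N$ concentrates (in the sense of a single near-zero of $M_N$) at the unique point $c_\star$ where the deterministic electric field is balanced, at distance $\abs{c_\star-\xi}\sim 1/\abs{d\phi_N(\xi)}\sim N^{-1+\Gamma}$ from $\xi$, in agreement with the electrostatic heuristic of \S\ref{S:Electrostatic Interp}; off of a small neighbourhood of $c_\star$ the density is smooth and of size $O(N^{-\ep})$.

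Feeding this into Stokes' theorem gives both bounds at once. For $r=R_+$ the disk $D_{R_+N^{-1/2}}(\xi)$ has radius $N^{-1+\Gamma+\ep}\gg N^{-1+\Gamma}$, so $c_\star$ lies well inside; the Stokes boundary integral picks up exactly the single near-zero of the leading factor of $M_N$, producing $\E{\curly N_{R_+}}=1+O(N^{-\ep})$. For $r=R_-$ the radius is $N^{-1+\Gamma-\ep}\ll N^{-1+\Gamma}$, so $c_\star$ lies well outside $D_{R_-N^{-1/2}}(\xi)$ and the integrand in \eqref{E:KRin} is uniformly $O(N^{-\ep})$ on the circle, yielding $\E{\curly N_{R_-}}=O(N^{-\ep})$. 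The main obstacle is to obtain the expansion of $\log M_N$ with error $O(N^{-\ep})$ uniformly on $D_{R_+N^{-1/2}}(\xi)$: one must differentiate the Szeg\H o kernel asymptotic \eqref{E:Szego Near Off-Diag} twice and simultaneously track the two competing deterministic contributions --- of size $N$ from the conditioning at $\xi$ and of size $N^{1-\Gamma}$ from $d\phi_N(\xi)$ --- without degrading the rate. The derivative Bergman kernel asymptotics in \S\ref{S:BS Derivs Asymptotics} are calibrated precisely for this, and the remainder estimate \eqref{E:Remainder Estimate Near-Diag} suffices exactly because of the hypothesis $\Gamma+\ep<\tfrac{1}{2}$.
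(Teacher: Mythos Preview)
Your approach is essentially the same as the paper's. The paper also starts from Poincar\'e--Lelong for $F_N'$, applies Stokes to reduce to a circle integral of $\partial_z\log M_N$ (this is Lemma~\ref{L:BK E Formula}, which records the identity $\E{\curly N_r}=\frac{r}{2\pi}\int_0^{2\pi}\frac{d}{dz}\log\big(\frac{d^2}{dz\,d\cl w}\big|_{z=w}\Pi_{\sigma_N}^{\xi}(\widehat z,\widehat w)\big)\,e^{i\theta}\,d\theta$), and then plugs in the derivative Bergman kernel expansion from the Appendix (Corollary~\ref{C:AA Log Cond Bergman Kernel}, especially the estimate \eqref{E:DLog T}) to read off the two cases $r=R_\pm$. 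Your ``virtual pole at $\xi$ versus drift $d\phi_N(\xi)$'' dichotomy is exactly what is encoded in the denominator $1+C_N\,r^{-2}N^{-1+2\Gamma}+\cdots$ of \eqref{E:DLog T}: for $r=R_+$ this denominator is $1+O(N^{-\ep})$ and the circle integral returns $1$, while for $r=R_-$ the extra term dominates and kills the integral to $O(N^{-\ep})$.
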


\begin{Lem}\label{L:Variance}
For any $\ep>0$, if $r\leq N^{-1/2+\Gamma+\ep},$ then we have
\[\Var[\curly N_r]=O(N^{-3/2+2\Gamma +3\ep}).\]
The implied constant depends only on $\ep.$
\end{Lem}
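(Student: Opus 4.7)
The plan is to bound $\Var[\curly N_r]$ via the Kac-Rice formula. Set $F_N := \nabla^{\sigma_N} s_N$; in a local trivialization near $\xi$ this is a complex-valued Gaussian field whose zeros in $D := D_{r N^{-1/2}}(\xi)$ are exactly the critical points counted by $\curly N_r$. The Kac-Rice formula for the variance of a zero count yields
\[
\Var[\curly N_r] = \E{\curly N_r} + \iint_{D \times D} \bigl(K_2(z,w) - K_1(z) K_1(w)\bigr)\, dA(z)\, dA(w),
\]
where $K_j$ denotes the $j$-point intensity of zeros of $F_N$. The analysis behind Lemma \ref{L:Expectation} already controls the first term, so the main task is the double integral.

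First, I would write $K_1$ and $K_2$ explicitly via Gaussian regression. Each is a function of the $2 \times 2$ covariance block of $(F_N(z), F_N(w))$ together with the associated conditional Jacobian densities, all encoded by the conditional Bergman kernel $\Pi_N^{\xi}$ and its mixed first-order derivatives. The natural parameter is the normalized conditional Bergman kernel $P_N^{\xi}(z, w)$ from \S\ref{S:Normalized Szego}, i.e., the correlation coefficient between $F_N(z)$ and $F_N(w)$.

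Next, I would invoke the near off-diagonal asymptotics of Theorem \ref{T:Szego Asymptotics} and the derivative asymptotics of the Appendix \S\ref{S:BS Derivs Asymptotics}. For $z, w \in D$, the Gaussian factor $e^{-z \cdot \bar{w} + \frac{1}{2}(|z|^2 + |w|^2)}$ in the near off-diagonal expansion forces the covariance of $F_N$ to be highly degenerate throughout $D$, with $|P_N^{\xi}(z, w)|$ close to $1$. The hypothesis (\ref{E:Efield Condition}) enters precisely here: it supplies the lower bound, of order a positive power of $N$ depending on $\Gamma$, on the denominator appearing in the definition of $P_N^{\xi}$, without which the Bergman expansions cannot be exploited uniformly on $D \times D$.

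Finally, I would expand $K_2(z, w) - K_1(z) K_1(w)$ in the near-degenerate regime and integrate. Each of $K_2$ and $K_1 K_1$ has a diagonal contribution of order $|z - w|^{-2}$ (arising from the near-degeneracy of the $2 \times 2$ covariance block of $(F_N(z), F_N(w))$), but these contributions agree at leading order and cancel in the difference, leaving an integrable remainder. Tracking the powers of $N$, of $r \le N^{-1/2 + \Gamma + \ep}$, and of the lower bound from (\ref{E:Efield Condition}) then yields the claimed $O(N^{-3/2 + 2\Gamma + 3\ep})$ for the integral over $D \times D$, which also absorbs the $\E{\curly N_r}$ contribution in the stated regime.

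The hard part will be the cancellation in the final step. Naively, both $K_2(z, w)$ and $K_1(z) K_1(w)$ individually blow up like $|z - w|^{-2}$ as $z \to w$, and only the precise subleading terms in the Bergman kernel expansion—together with the derivative asymptotics developed in the Appendix—exhibit the matching of singular parts. Making this quantitative, with the exponent $-3/2 + 2\Gamma + 3\ep$ uniform in $\Gamma \in [0, 1/2)$ and $\ep \in (0, 1/2 - \Gamma)$, is the crux of the argument.
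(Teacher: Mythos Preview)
Your proposal takes a genuinely different route from the paper. The paper does \emph{not} use Kac--Rice over $D\times D$. Instead it first writes $\curly N_r$ as a \emph{boundary} integral via Poincar\'e--Lelong and Stokes (equation~(\ref{E:Arg Prin})), then squares and applies the bi-potential identity of Lemma~\ref{L:G} (from \cite{NV}) to obtain
\[
\Var[\curly N_r]=\frac{r^2}{(2\pi)^2}\int_0^{2\pi}\!\!\int_0^{2\pi}\frac{d^2}{dz\,dw}\,G\bigl(P_N^{\xi}(\widehat z,\widehat w)\bigr)\,e^{i(\theta_1+\theta_2)}\,d\theta_1\,d\theta_2,
\]
a double integral over $\partial D_r\times\partial D_r$. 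The estimates (\ref{E:DZ Lambda})--(\ref{E:Den}) of Corollary~\ref{C:AA Normalized Szego} then bound the integrand directly, with the $r^2$ prefactor supplying the power of $N$.

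Your plan, as written, has a structural gap. In the decomposition
\[
\Var[\curly N_r]=\E{\curly N_r}+\iint_{D\times D}\bigl(K_2-K_1K_1\bigr),
\]
the first term is \emph{not} small in the relevant regime: for $r=R_+$ Lemma~\ref{L:Expectation} gives $\E{\curly N_{R_+}}=1+O(N^{-\ep})$, which is certainly not $O(N^{-3/2+2\Gamma+3\ep})$. Consequently the double integral must equal $-1+O(N^{-3/2+2\Gamma+3\ep})$; in particular it cannot itself be $O(N^{-3/2+2\Gamma+3\ep})$ as you assert. The cancellation you describe --- matching the $|z-w|^{-2}$ singularities of $K_2$ and $K_1K_1$ near the diagonal --- only renders $K_2-K_1K_1$ locally integrable; it does nothing to produce the additional, much finer cancellation between the order-one quantity $\E{\curly N_r}$ and the order-one value of $\iint(K_2-K_1K_1)$. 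Your sentence ``The analysis behind Lemma~\ref{L:Expectation} already controls the first term'' is therefore incorrect, and the closing clause ``which also absorbs the $\E{\curly N_r}$ contribution'' asserts exactly the missing cancellation without supplying any mechanism for it. The paper's boundary-integral formulation sidesteps this entirely: the variance is expressed from the outset as a single integral whose integrand is uniformly small, so no two large terms ever have to be subtracted.
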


Indeed, since $N_r$ is an integer valued random variance, by Chebyshev's inequality:
\begin{align*}
  \Prob{\lr{\curly N_{R_+}=1\cap \curly N_{R_-}=0}^c}&\leq \Prob{\curly N_{R_+} \neq 1}+\Prob{\curly N_{R_-}\neq 0}\\
                                                                 &\leq  \Prob{\abs{\curly N_{R_+}-\E{\curly N_{R_+}}}> 1+O(N^{-\ep})}\\
&\,\,+\Prob{\abs{\curly N_{R_-}-\E{\curly N_{R_-}}}> 1+O(N^{-\ep})}\\
                                                                 &\leq \frac{\Var [\curly N_{R^+}]+\Var[\curly N_{R_{\ep}^-}]}{1+O(N^{-\ep})}\\
                                                                 &=O(N^{-3/2+2\Gamma+3\ep}).
\end{align*}
\subsection{Proof of Lemma \ref{L:Expectation}}\label{S:Expected Value Comp}
Since $\sigma_N(\xi)\neq 0,$ we may write as in \S\ref{S:Notation} $s_N=p_N\cdot \sigma_N = \inprod{a}{\Phi_N^{\xi}}\cdot \sigma_N,$ where $a=\lr{a_0,\ldots, a_{d_N}}$ is a standard gaussian vector on $H_N^{\xi}$ and $\Phi_N^{\xi}(z)=\lr{f_1^{\xi}(z),\cdots, f_{d_N}^{\xi}(z)}$ is the coherent states embedding relative to the frame $\sigma_N.$ We will write 
\[\frac{d}{dz}\Phi_N^{\xi}(z)=\lr{\frac{d}{dz}f_1^{\xi}(z),\cdots, \frac{d}{dz}f_{d_N}^{\xi}(z)}.\]
The $N^{th}$ conditional Bergman kernel relative to $\sigma_N$ (introduced in \S\ref{S:Def SZ}) is therefore $\Pi_{\sigma_N}^{\xi}(z,w)=\sum_{j=1}^{d_n}f_j^{\xi}(z)\cl{f_j^{\xi}(w)}.$ Our proof of Lemma \ref{L:Expectation} proceeds in two steps: 
\begin{enumerate}
\item First, in Lemma \ref{L:BK E Formula}, we write, for any $r>0,$ $\E{\curly N_r}$ in terms of $\Pi_{\sigma_N}^{\xi}(z,w)$ by using the Poincar\'e-Lelong formula: 
\[\E{\curly N_r}=\frac{r}{2\pi}\int_0^{2\pi}I(re^{i\theta})d\theta,\]
where
\[I(z):=\frac{d}{dz}\log \lr{\frac{d^2}{dzd\cl{w}}\bigg|_{z=w=re^{i\theta}}\Pi_{\sigma_N}^{\xi}(\widehat{z},\widehat{w})}\cdot e^{i\theta},\]
and $\widehat{u}=u\cdot N^{-1/2}.$
\item We then complete the proof by using the estimate (\ref{E:DLog T}) to see that if $\abs{z}=R_+,$ then $I(z)=\frac{R_+^{-1}\lr{1+O(N^{-\ep})}}{1+O(N^{-\ep})},$ while if $\abs{z}=R_-,$ then $I(z)=\frac{O(R_-^{-1}N^{-\ep})}{1+O(N^{-\ep})}.$ 
\end{enumerate}

\begin{Lem}\label{L:BK E Formula}
  We have
  \begin{equation}
\E{\curly N_r}=\frac{r}{2\pi}\int_0^{2\pi}\frac{d}{dz}\log \lr{\frac{d^2}{dzd\cl{w}}\bigg|_{z=w=re^{i\theta}}\Pi_{\sigma_N}^{\xi}(\widehat{z},\widehat{w})}\cdot e^{i\theta}d\theta.\label{E:Expected Value Bergman Kernel}
\end{equation}
\end{Lem}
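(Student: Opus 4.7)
The plan is to recognize $\curly N_r$ as the number of zeros of a single-variable holomorphic Gaussian analytic function on $D_r$, and then invoke the Poincar\'e--Lelong formula together with Stokes' theorem. Since $\sigma_N(\xi)\neq 0$, on a neighborhood of $\xi$ we factor $s_N=p_N\cdot\sigma_N$ with $p_N=s_N/\sigma_N=\inprod{a}{\Phi_N^{\xi}}$, so $\nabla^{\sigma_N}s_N$ vanishes at $\widehat z$ precisely when $p_N'(\widehat z)=0$. Introducing the rescaled coordinate $\widehat z=z\cdot N^{-1/2}$ and setting $q_N(z):=p_N(\widehat z)$, we obtain $q_N'(z)=N^{-1/2}p_N'(\widehat z)$, so that
\[\curly N_r=\#\setst{z\in D_r}{q_N'(z)=0}.\]

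The holomorphic Gaussian field $q_N'$ has covariance kernel
\[K(z,w):=\E{q_N'(z)\,\cl{q_N'(w)}}=\frac{d^2}{dz\,d\cl w}\,\Pi_{\sigma_N}^{\xi}(\widehat z,\widehat w),\]
because $q_N$ has covariance $\Pi_{\sigma_N}^{\xi}(\widehat z,\widehat w)$ and the derivatives commute with the Gaussian sum. By Poincar\'e--Lelong applied pathwise, $[Z_{q_N'}]=\frac{i}{2\pi}\dell\dellbar\log\abs{q_N'}^2$. Taking expectations and using that $q_N'(z)/\sqrt{K(z,z)}$ is a standard complex Gaussian, so that $\E{\log\abs{q_N'(z)}^2}=\log K(z,z)+C$ for a universal constant $C$ (whose $\dell\dellbar$ vanishes), we conclude $\E{[Z_{q_N'}]}=\frac{i}{2\pi}\dell\dellbar\log K(z,z)$. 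Integrating over $D_r$, using $\dell\dellbar=-d\dell$, and applying Stokes' theorem gives
\[\E{\curly N_r}=-\frac{i}{2\pi}\int_{\dell D_r}\dell\log K(z,z),\]
and parametrizing $\dell D_r$ by $z=re^{i\theta}$ with $dz=ire^{i\theta}d\theta$ yields precisely the stated identity.

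The only delicate point is justifying $\E{\dell\dellbar\log\abs{q_N'}^2}=\dell\dellbar\E{\log\abs{q_N'}^2}$ in the distributional sense. This is standard once $K(z,z)$ is known to be smooth and strictly positive on a neighborhood of $\cl{D_r}$: positivity is the nondegeneracy of the Gaussian field $q_N'$ and can be read off from the near off-diagonal Bergman kernel asymptotics of Theorem \ref{T:Szego Asymptotics}, while the interchange follows the Shiffman--Zelditch template for expected zero densities of random sections. I expect no real obstacle beyond keeping track of the $N^{-1/2}$ scaling factors in passing between $z$ and $\widehat z$.
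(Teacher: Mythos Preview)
Your proposal is correct and follows essentially the same route as the paper: Poincar\'e--Lelong, Stokes, and the observation that the normalized value $q_N'(z)/\sqrt{K(z,z)}$ is a standard complex Gaussian so that $\E{\log\abs{q_N'(z)}^2}-\log K(z,z)$ is a constant killed by $\dell$. The only cosmetic difference is ordering: the paper applies Stokes pathwise first and then takes the expectation (invoking unitary invariance of the Gaussian to identify the constant term explicitly as $\E{\log\abs{a_1}^2}$), whereas you take the expectation of the zero current first in the Edelman--Kostlan/Shiffman--Zelditch style and apply Stokes afterward; both orderings require the same justification and yield the same formula.
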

\begin{proof}
The Poincar\'e-Lelong formula states the current of integration on the zero set of a non-zero analytic function $f$ is $Z_f=\frac{i}{2\pi}\dell\dellbar \log \abs{f}^2.$ Hence, 
\begin{equation}
\curly N_r=\int_{D_{r\cdot N^{-1/2}}} Z_{p_N}=\int_{D_{r\cdot N^{-1/2}}} \frac{i}{2\pi}\dell_z \dellbar_z \log \abs{\frac{d}{dz}p_N(z)}^2.\label{E:Counting Via PLL}
\end{equation}

Since $\dell\dellbar = -d\dell,$ we may use Stokes theorem to write 
\begin{equation}
\curly N_r=\frac{1}{2\pi i} \oint_{\dell D_{r\cdot N^{-1/2}}}\dell \log \abs{\frac{d}{dz}p_N(z)}^2=\frac{1}{2\pi i}\oint_{\dell D_r} \dell \log \abs{p_N(\widehat{z})}^2.\label{E:Arg Prin}
\end{equation}
Applying Fubini's theorem and differentiating under the integral sign, we find that
\[\E{\curly N_r}=\frac{r}{2\pi}\int_0^{2\pi} \frac{d}{dz}\E{\log \abs{\frac{d}{dz} p_N(re^{i\theta}\cdot N^{-1/2})}^2} \cdot e^{i\theta}d\theta.\]
Writing $\norm{\cdot}$ for the $l_2$ norm a vector, we note that 
\[\E{\log \abs{\frac{d}{dz}p_N(\widehat{z})}}^2=\E{\log \abs{\inprod{a}{\frac{\frac{d}{dz}\Phi_N^{\xi}(\widehat{z})}{\norm{\frac{d}{dz}\Phi_N^{\xi}(\widehat{z})}}}}^2}+\log \norm{\frac{d}{dz}\Phi_N^{\xi}(\widehat{z})}^2.\]
Since the gaussian measure is unitarily invariant, we see that the first term is $\E{\log \abs{a_1}},$ and there is therefore annihilated by $\frac{d}{dz}.$ The following observation completes the proof: 
\[\norm{\frac{d}{dz}\Phi_N^{\xi}(\widehat{z})}^2=\sum_{j=1}^{d_N}\abs{\frac{d}{dz}f_j^{\xi}(\widehat{z})}^2= \frac{d^2}{dzd\cl{w}}\bigg|_{z=w} \Pi_{\sigma_N}^{\xi}(\widehat{z}, \widehat{w}).\]
\vskip -.5cm
\end{proof}
\noindent Combining formula (\ref{E:Expected Value Bergman Kernel}) with (\ref{E:Log BN}), we find
\begin{equation}
\E{\curly N_r}=\frac{r}{2\pi}\int_0^{2\pi} \lr{N^{-1/2} \Di{z}{}\bigg|_{z=re^{i\theta}}\phi_N(\widehat{z})+\frac{d}{dz}\bigg|_{z=re^{i\theta}}\log T(z,z)+O(N^{-1/2+\ep})}e^{i\theta}d\theta.\label{E:E Integral}
\end{equation}
We may taylor expand to write
\[\Di{z}{\phi_N}(\widehat{z})=\Di{z}{\phi_N}(\xi)+O(\abs{z}N^{-1/2}).\]
Hence, 
\[\frac{r}{2\pi}\int_0^{2\pi} N^{-1/2}\Di{z}{}\bigg|_{z=re^{i\theta}} \phi_N (\widehat{z})e^{i\theta}d\theta = O(r^2N^{-1}).\]
Therefore, we find
\[\E{\curly N_r}=\frac{r}{2\pi}\int_0^{2\pi}\frac{d}{dz}\bigg|_{z=re^{i\theta}} \log T(z,z)e^{i\theta}d\theta + O(rN^{-1/2+\ep}).\]
Finally, using the estimate (\ref{E:DLog T}), we have 
\[\frac{d}{dz}\bigg|_{z=R_-\cdot e^{i\theta}} \log T(z,z)= \frac{R_-^{-1}\lr{e^{-i\theta}+O(N^{-1/2+\Gamma})}}{1+N^{2\ep}\lr{1+O(N^{-\ep})}}.\]
Substituting this expression into the integral (\ref{E:E Integral}), we conclude (\ref{E:E Lower Bound}). Similarly, the estimate (\ref{E:DLog T}) yields
\[\frac{d}{dz}\bigg|_{z=R_+\cdot e^{i\theta}} \log T(z,z)= \frac{R_+^{-1}\lr{e^{-i\theta}+O(N^{-1/2+\Gamma})}}{1+O(N^{-\ep})},\]
from which we deduce (\ref{E:E Upper Bound}).

\subsection{Proof of Lemma \ref{L:Variance}}\label{S:Variance Comp}
Note that we may write
\[P_N^{\xi}(z,w)=\frac{\abs{\frac{d^2}{dzd\cl{w}}\Pi_{\sigma_N}^{\xi}(z,w)}}{\sqrt{\frac{d^2}{dzdw}\big|_{w=z}\Pi_{\sigma_N}^{\xi}(z,w)\cdot \frac{d^2}{dzdw}\big|_{z=w}\Pi_{\sigma_N}^{\xi}(z,w)}},\]
where, $\Pi_{\sigma_N}^{\xi}(z,w)=\sum_{j=1}^{d_n}f_j^{\xi}(z)\cl{f_j^{\xi}(w)}$ is the Bergman kernel relative to $\sigma_N$ (cf \S \ref{S:Def SZ}). 
\begin{Lem}\label{L:Var N Formula}
Let us write $\widehat{u}=u\cdot N^{-1/2}.$ We have the following formula for $\Var[\curly N_r]:$
  \begin{equation}
\frac{r^2}{\lr{2\pi}^2}\int_0^{2\pi}\int_0^{2\pi}\frac{d}{dz}\frac{d}{dw}\bigg|_{z=re^{i\theta_1},\, w=re^{i\theta_2}}G(P_N^{\xi}(\widehat{z},\widehat{w}))e^{i\lr{\theta_1+\theta_2}}d\theta_1d\theta_2,\label{E:Var N Formula}
\end{equation}
where 
\[G(t)=\frac{\gamma^2}{4}-\frac{1}{4}\int_0^{t^2}\frac{\log(1-s)}{s}ds.\]
\end{Lem}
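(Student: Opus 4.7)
The plan is to imitate the derivation of Lemma~\ref{L:BK E Formula}, but applied to $\curly N_r^2$, and reduce the resulting two-point expectation to a scalar covariance identity for jointly complex Gaussian variables. The contour integral computation in the proof of Lemma~\ref{L:BK E Formula} already gives
\[\curly N_r = \frac{r}{2\pi}\int_0^{2\pi} X_N(\theta) e^{i\theta}d\theta, \qquad X_N(\theta) := \frac{d}{dz}\log\left|\frac{d}{dz}p_N(\widehat z)\right|^2\bigg|_{z=re^{i\theta}}.\]
Squaring this expression, taking expectations, and subtracting $\E{\curly N_r}^2$ produces
\[\Var[\curly N_r] = \frac{r^2}{(2\pi)^2}\int_0^{2\pi}\int_0^{2\pi} \widetilde C(\theta_1,\theta_2)\,e^{i(\theta_1+\theta_2)}d\theta_1 d\theta_2,\]
with $\widetilde C(\theta_1,\theta_2) := \E{X_N(\theta_1) X_N(\theta_2)} - \E{X_N(\theta_1)}\cdot\E{X_N(\theta_2)}$. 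The claim then reduces to identifying $\widetilde C(\theta_1,\theta_2)$ with $\frac{d}{dz}\frac{d}{dw} G(P_N^\xi(\widehat z,\widehat w))$ at $(z,w)=(re^{i\theta_1},re^{i\theta_2})$.

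Next, I isolate the random content exactly as in the proof of Lemma~\ref{L:Expectation}. Writing $\frac{d}{dz}p_N(\widehat z) = \|\frac{d}{dz}\Phi_N^\xi(\widehat z)\|\cdot \eta(z)$ where $\eta(z) := \inprod{a}{n_z}$ and $n_z := \frac{d}{dz}\Phi_N^\xi(\widehat z)/\|\cdot\|$ is a unit vector, one has that $\eta(z)$ is pointwise a standard complex Gaussian, and
\[X_N(\theta) = \underbrace{\frac{d}{dz}\log\left\|\frac{d}{dz}\Phi_N^\xi(\widehat z)\right\|^2\bigg|_{re^{i\theta}}}_{\text{deterministic}} + \frac{d}{dz}\log|\eta(z)|^2\bigg|_{z=re^{i\theta}}.\]
The first summand is deterministic and cancels in $\widetilde C$. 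Since $\E{\log|\eta(z)|^2} = -\gamma$ is independent of $z$, the mean of the random summand is annihilated by $\frac{d}{dz}$, so differentiating under the expectation (justified by finite Gaussian moments and the smooth, nonvanishing dependence of $\frac{d}{dz}\Phi_N^\xi(\widehat z)$ on $z$ near $\xi$) yields
\[\widetilde C(\theta_1,\theta_2) = \frac{d}{dz}\frac{d}{dw}\,\E{\log|\eta(z)|^2 \log|\eta(w)|^2}\bigg|_{z=re^{i\theta_1},\,w=re^{i\theta_2}}.\]

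The remaining substantive step is the scalar Gaussian moment identity. The pair $(\eta(z),\eta(w))$ is centered jointly complex Gaussian with unit variances, and by the definition (\ref{E:Normalized Szego Def I}) of the normalized Bergman kernel,
\[\bigl|\E{\eta(z)\overline{\eta(w)}}\bigr| = P_N^\xi(\widehat z,\widehat w).\]
A direct computation---either via the Laguerre expansion of $\log U$ for $U\sim\mathrm{Exp}(1)$, or by expanding $\E{|\eta(z)|^{2s}|\eta(w)|^{2t}} = \Gamma(1+s)\Gamma(1+t)\,{}_2F_1\bigl(-s,-t;1;P_N^\xi(\widehat z,\widehat w)^2\bigr)$ to order $st$ around $s=t=0$---gives
\[\E{\log|\eta(z)|^2\log|\eta(w)|^2} = \gamma^2 - \int_0^{P_N^\xi(\widehat z,\widehat w)^2}\!\!\frac{\log(1-s)}{s}ds,\]
which is (up to the additive and multiplicative constants built into the definition of $G$, both invisible after applying $\frac{d}{dz}\frac{d}{dw}$) exactly $G(P_N^\xi(\widehat z,\widehat w))$. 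Plugging this into the above double integral completes the proof. The main obstacle is really just this last Gaussian-moment identity: it is intrinsic to a pair of correlated standard complex Gaussians, independent of any bundle geometry, and is what generates the polylogarithm structure of $G$; everything else follows from Lemma~\ref{L:BK E Formula}'s contour-integral representation and the same deterministic/random decomposition already deployed in the proof of Lemma~\ref{L:Expectation}.
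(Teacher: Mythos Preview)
Your proof is correct and mirrors the paper's: both square the contour-integral representation of $\curly N_r$ from Lemma~\ref{L:BK E Formula}, split off the deterministic norm $\norm{\frac{d}{dz}\Phi_N^{\xi}}$ from the unit-variance Gaussian factor, and reduce to the scalar identity $\E{\log\abs{\inprod{a}{u}}\log\abs{\inprod{a}{v}}}=G(\abs{\inprod{u}{v}})$, which the paper simply quotes as Lemma~3.3 of \cite{NV} rather than deriving via your hypergeometric expansion.

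One small slip: multiplicative constants are \emph{not} invisible after applying $\frac{d}{dz}\frac{d}{dw}$. Your expression $\gamma^2-\int_0^{P^2}\frac{\log(1-s)}{s}\,ds$ equals $4G(P)$, not $G(P)$, and since you built $X_N$ from $\log\abs{\cdot}^2$ rather than $\log\abs{\cdot}$ this factor of $4$ is genuine and should be carried through. The paper avoids it by working with $\log\abs{\cdot}$ from the outset, which is exactly why the $\tfrac14$ appears in its definition of $G$.
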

\begin{proof}
Using equation (\ref{E:Counting Via PLL}), we have that
\begin{equation}
\E{\curly N_r^2}=\frac{r^2}{\lr{2\pi}^2}\int_0^{2\pi}\int_0^{2\pi}\frac{d^2}{dzdw}\bigg|_{z=w=re^{i\theta}}\E{\log \abs{p_N(\widehat{z})}\log \abs{p_N(\widehat{w})}}e^{i\lr{\theta_1+\theta_2}}d\theta_1d\theta_2.\label{E:NR2}
\end{equation}
\noindent For any vector $v\in \C^{d_N-1}\backslash\set{0},$ we will write $\twiddle{v}=\frac{v}{\norm{v}}.$ We have
\[\log \abs{p_N(\widehat{z})}=\log \abs{\inprod{a}{\Phi_N^{\xi}(\widehat{z})}}=\log\abs{\inprod{a}{\twiddle{\Phi_N^{\xi}}(\widehat{z})}}+\log \norm{\Phi_N^{\xi}(\widehat{z})}\]
and similarly for $\log \abs{p_N(\widehat{w})}.$ We therefore find that 
\[\E{\curly N_r^2}=\frac{r^2}{\lr{2\pi}^2}\int_0^{2\pi}\int_0^{2\pi}\frac{d^2}{dzdw}\bigg|_{z=re^{i\theta_1},\, w=re^{i\theta_2}}\lr{E_1+E_2+E_3+E_4}e^{i\lr{\theta_1+\theta_2}}d\theta_1d\theta_2,\]
where
\begin{align*}
E_1(z,w)&:= \log \norm{\Phi_N^{\xi}(\widehat{z})}\log\norm{\Phi_N^{\xi}(\widehat{w})}\\
E_2(z,w)&:= \E{\log \abs{\inprod{a}{\twiddle{\Phi_N^{\xi}}(\widehat{z})}}}\cdot \log \norm{\Phi_N^{\xi}(\widehat{w})}\\
E_3(z,w)&:= \E{\log \abs{\inprod{a}{\twiddle{\Phi_N^{\xi}}(\widehat{w})}}} \cdot\log\norm{\Phi_N^{\xi}(\widehat{z})}\\
E_4(z,w)&:= \E{\log \abs{\inprod{a}{\twiddle{\Phi_N^{\xi}}(\widehat{z})}} \log \abs{\inprod{a}{\twiddle{\Phi_N^{\xi}}(\widehat{w})}}}.
\end{align*}
Since the gaussian measure $a$ is unitarily invariant, we see that $E_2(z,w),E_3(z,w)$ are independent of $z,w,$ respectively and hence are annihilated by $\frac{d^2}{dzdw}.$ Moreover, 
\begin{equation}
\frac{r^2}{\lr{2\pi}^2}\int_0^{2\pi}\int_0^{2\pi}\frac{d^2}{dzdw}\bigg|_{z=re^{i\theta_1},\, w=re^{i\theta_2}}E_1(z,w) e^{i\lr{\theta_1+\theta_2}}=\E{N_r}^2.\label{E:E1 Integral}
\end{equation}
In order to interpret $E_4(z,w),$ we now recall the following result.
\begin{Lem}[Lemma 3.3 from \cite{NV}]\label{L:G} Let $a$ be a standard Gaussian random vector in $\C^{N+1}$ and let $u,v\in C^{N+1}$ denote unit vectors. Then 
\[\E{\log\abs{\inprod{a}{u}}\log\abs{\inprod{a}{v}}}=G(\abs{\inprod{u}{v}}),\]
where $\inprod{\cdot}{\cdot}$ is the usual Hermitian inner product on $\C^{N+1}.$
\end{Lem}
\noindent Observe that $\abs{\inprod{\twiddle{\Phi_N^{\xi}}(\widehat{z})}{\twiddle{\Phi_N^{\xi}}(\widehat{w})}}=P_N^{\xi}(\widehat{z},\widehat{w}).$ Putting this together with (\ref{E:E1 Integral}), we find that
\[\Var[\curly N_r]=\frac{r^2}{\lr{2\pi}^2}\int_0^{2\pi}\int_0^{2\pi}\frac{d^2}{dzdw}\bigg|_{z=re^{i\theta_1},\, w=re^{i\theta_2}}G(P_N^{\xi}(\widehat{z},\widehat{w})) e^{i\lr{\theta_1+\theta_2}}d\theta_1d\theta_2,\]
as claimed. 
\end{proof}
\noindent To complete the proof, note that
\begin{equation}
G'(t)=-\frac{\log(1-t^2)}{2t}\qquad G''(t)=\frac{1}{1-t^2}+\frac{\log(1-t^2)}{2t^2}.\label{E:F Derivs}
\end{equation}
Hence, we can write $\Var[\curly N_r]$ as 
\begin{equation}
\frac{r^2}{2\pi}\int_0^{2\pi}\int_0^{2\pi} e^{i\lr{\theta_1+\theta_2}}I(z,w)\bigg|_{z=re^{i\theta_1},\, w= re^{i\theta_2}} d\theta_1 d\theta_2,\label{E:Variance Integral}
\end{equation}
where
\[I(z,w):=G''(P_N^{\xi}(\widehat{z},\widehat{w}))\Di{z}{}P_N^{\xi}(\widehat{z},\widehat{w})\cdot \Di{w}{}P_N^{\xi}(\widehat{z},\widehat{w})+G'(P_N^{\xi}(\widehat{z},\widehat{w}))\frac{d^2}{dzdw}P_N^{\xi}(\widehat{z},\widehat{w}).\]
\noindent Substituting (\ref{E:DZ Lambda})-(\ref{E:Den}) of Corollary \ref{C:AA Normalized Szego} into (\ref{E:Variance Integral}) and noting that $\log\abs{re^{i\theta_1}-re^{i\theta_2}}$ has finite integral completes the proof.

\section{Appendix: Asymptotic Expansions for Bergman Kernel Derivatives}\label{S:BS Derivs Asymptotics} 
We now apply the asymptotic expansions for the Bergman kernel from \S\ref{S:BS Asymptotics} to obtain asymptotic expansions for its derivatives given in Lemma \ref{L:AA Cond Bergman Kernel Off-Diag} and Corollaries \ref{C:AA Log Cond Bergman Kernel} and \ref{C:AA Normalized Szego}. These will be the crucial technical tools in proving Theorem \ref{T:Pairing}. 

\begin{Lem}\label{L:AA Cond Bergman Kernel Off-Diag}
Fix $\xi \in \Sigma.$ Write $\Pi_{\sigma_N}^{\xi}$ for the $N^{th}$ conditional Bergman kernel relative to $\sigma_N$ (defined in \S\ref{S:Def SZ}). In K\"ahler normal coordinates around $\xi,$ we write $\widehat{u}=u\cdot N^{-1/2}.$ The following expression is valid uniformly for $\abs{\widehat{z}},\abs{\widehat{w}}<\sqrt{\log N}$
\begin{equation}
\frac{d^2}{dzd\cl{w}} \Pi_{\sigma_N}^{\xi}(\widehat{z},\widehat{w})=\frac{N^3}{\pi}e^{\frac{1}{2}\lr{\phi_N(\widehat{z})+\phi_N(\widehat{w})+i\twiddle{\gamma_N}(\widehat{z})-i\twiddle{\gamma_N}(\widehat{w})+2z\cl{w}-\abs{z}^2-\abs{w}^2}}\cdot T(z,w)\cdot \lr{1+R_N(z,w)},\label{E:BN Off-Diag}
\end{equation}
We've written
\begin{align*} 
T(z,w)&=\lr{1-e^{-z\cl{w}}}\left[1+\frac{1}{4}\lr{\Di{z}{\phi_N}(\widehat{z})N^{-1/2}+\Di{z}{\gamma_N}(\widehat{z})N^{-1/2}-\cl{z}+2\cl{w}}\right.\\
&\left.\cdot \lr{\Di{\cl{w}}{\phi_N}(\widehat{w})N^{-1/2}+\Di{\cl{w}}{\gamma_N}(\widehat{w})N^{-1/2}-w+2z}\right]\\
&+e^{-z\cl{w}}\left[1-z\cl{w}+\frac{z}{2}\lr{\Di{z}{\phi_N}(\widehat{z})N^{-1/2}+\Di{z}{\gamma_N}(\widehat{z})N^{-1/2}-\cl{z}+2\cl{w}}\right.\\
&\left. +\frac{\cl{w}}{2}\lr{\Di{\cl{w}}{\phi_N}(\widehat{w})N^{-1/2}+\Di{\cl{w}}{\gamma_N}(\widehat{w})N^{-1/2}-w+2z}\right]
\end{align*}
We've also set $\gamma_N$ to be the ``leading harmonic part of $\phi_N$'':
\[\gamma_N(z,\cl{z}):=\phi_N(\xi)+\Di{z}{\phi_N}\bigg|_{\xi}\cdot z+\Di{\cl{z}}{\phi_N}\bigg|_{\xi} \cdot \cl{z}+\frac{1}{2}\left[\DDi{z}{\phi_N}\bigg|_{\xi}\cdot z^2+\DDi{\cl{z}}{\phi_N}\bigg|_{\xi}\cdot \cl{z}^2\right],\]
and we've written $\twiddle{\gamma_N}$ for its harmonic conjugate. Finally, as in Theorem \ref{T:Szego Asymptotics}, the remainders $R_N(z,w)$ satisfy the estimates (\ref{E:Remainder Estimate Near-Diag}). 
\end{Lem}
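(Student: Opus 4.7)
The plan is to reduce the conditional Bergman kernel to the unconditional Szeg\"o kernel via a standard conditioning identity, apply the near off-diagonal expansion of Theorem \ref{T:Szego Asymptotics} in Heisenberg coordinates centered at $\xi$, and then differentiate twice. Since $HGE_N^{\xi}(L,h)$ is obtained from $HGE_N(L,h)$ by conditioning on $s_N(\xi)=0$, a standard Gaussian calculation gives
\[
\Pi_N^{\xi}(z,w) = \Pi_N(z,w) - \frac{\Pi_N(z,\xi)\otimes \Pi_N(\xi,w)}{\Pi_N(\xi,\xi)}.
\]
As $\sigma_N(\xi)\neq 0$ by hypothesis, dividing by $\sigma_N(z)\otimes \overline{\sigma_N(w)}$ yields the same relation for the scalar-valued $\sigma_N$-frame kernels $\Pi_{\sigma_N}^{\xi}$ and $\Pi_{\sigma_N}$. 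Writing the preferred frame $e$ at $\xi$ (Definition \ref{D:Preferred Frame}) as $\sigma_N = f\cdot e^{\otimes N}$ for a local holomorphic $f$, the frame change gives $\Pi_{\sigma_N}(z,w) = \Pi_e(z,w)/(f(z)\overline{f(w)})$ together with $\phi_N(z) = -2\log|f(z)| - 2N\log\|e(z)\|_h$. The ``leading harmonic part'' $\gamma_N$ in the statement is precisely designed so that $\gamma_N + i\widetilde\gamma_N$ is the holomorphic function whose real part is the second-order Taylor jet of $\phi_N$ at $\xi$; this is the combination needed to absorb the holomorphic and anti-holomorphic parts of $-2\log f$ arising from the frame change into $\phi_N$ and its harmonic conjugate evaluated at $\widehat z, \widehat w$.

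\textbf{Asymptotics and cancellation.} Lifting sections to the circle bundle $X \to \Sigma$ via \eqref{E:Section Lift} and applying Theorem \ref{T:Szego Asymptotics} supplies near off-diagonal expansions for $\widehat{\Pi}_N(0,\widehat z;0,\widehat w)$, $\widehat{\Pi}_N(0,\widehat z;0,0)$, $\widehat{\Pi}_N(0,0;0,\widehat w)$, and $\widehat{\Pi}_N(0,0;0,0)$, each with multiplicative remainder $1+O(N^{-1/2+\epsilon})$. Undoing the lift (dividing by $\|e(\widehat u)\|_h^N$ at each slot and by the appropriate factor of $f$ or $\overline f$ to pass to the $\sigma_N$-frame), the key observation is that $\Pi_{\sigma_N}(\widehat z,\widehat w)$ acquires the off-diagonal phase $e^{z\overline w}$ after the $\widehat u = u\cdot N^{-1/2}$ rescaling, while $\Pi_{\sigma_N}(\widehat z,\xi)\Pi_{\sigma_N}(\xi,\widehat w)/\Pi_{\sigma_N}(\xi,\xi)$ carries no such off-diagonal phase, being a product of one-sided evaluations with the second argument at $0$. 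Factoring out the common Gaussian prefactor
\[
\exp\tfrac{1}{2}\bigl(\phi_N(\widehat z)+\phi_N(\widehat w)+i\widetilde{\gamma_N}(\widehat z)-i\widetilde{\gamma_N}(\widehat w)+2z\overline w-|z|^2-|w|^2\bigr)
\]
from both terms then collapses their difference into a bracket of the form $(1 - e^{-z\overline w})(1 + \text{lower order})$. This is the source of the $(1-e^{-z\overline w})$ factor appearing in the first block of $T(z,w)$ and simultaneously isolates an $e^{-z\overline w}$ term to be combined with the derivatives.

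\textbf{Differentiation and main obstacle.} Finally apply $\frac{d^2}{dz\,d\overline w}$ to the reduced expression. The derivatives fall on three ingredients: (i) the Gaussian exponential, whose $\partial_z$ produces exactly the linear combination $\partial_z\phi_N(\widehat z)\,N^{-1/2} + \partial_z\gamma_N(\widehat z)\,N^{-1/2} - \overline z + 2\overline w$, and analogously for $\partial_{\overline w}$, which are precisely the parenthetical factors visible in $T(z,w)$; (ii) the bracket $1 - e^{-z\overline w}$, whose product-rule interaction with the Gaussian contributes the first $(1-e^{-z\overline w})[1 + \tfrac{1}{4}(\cdots)(\cdots)]$ block when both derivatives hit the exponential, and the second $e^{-z\overline w}[1 - z\overline w + \tfrac{z}{2}(\cdots) + \tfrac{\overline w}{2}(\cdots)]$ block when at least one derivative hits the $e^{-z\overline w}$ factor; (iii) the remainder $R_N$, which by the gradient estimates \eqref{E:Remainder Estimate Near-Diag} contributes only a multiplicative $1 + O(N^{-1/2+\epsilon})$ after differentiation, while the two $N^{-1/2}$ Jacobians from the chain rule, together with the leading order of the Szeg\"o kernel, assemble into the overall $N^3/\pi$ prefactor. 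The main obstacle is bookkeeping: tracking the cancellation between the unconditional and correction terms at the required $O(N^{-1/2+\epsilon})$ precision, while simultaneously grouping the product-rule contributions of $\partial_z\partial_{\overline w}[\,\text{Gaussian}\cdot(1-e^{-z\overline w})\,]$ into exactly the two brackets of $T(z,w)$ without introducing spurious same-order errors from cross terms with $R_N$ or from Taylor remainders of $\|e(\widehat u)\|_h^N$.
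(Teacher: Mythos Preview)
Your proposal is correct and follows essentially the same approach as the paper. The paper packages your steps (conditioning identity, frame change from $\sigma_N$ to a preferred frame, and the resulting factor $E_N = e^{\frac{N}{2}(\phi_N+i\widetilde{\gamma_N})(\cdot)}$) into an explicit intermediate Lemma (its Lemma~\ref{L:Step 1}), then substitutes the Szeg\"o asymptotics of Theorem~\ref{T:Szego Asymptotics} and differentiates exactly as you describe; your identification of how the two blocks of $T(z,w)$ arise from the product rule on $e^{\text{Gaussian}}\cdot(1-e^{-z\overline w})$ matches the paper's computation.
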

Before proving Lemma \ref{L:AA Cond Bergman Kernel Off-Diag}, we record several corollaries. 

\begin{corollary}\label{C:AA Log Cond Bergman Kernel}
With the notation of Lemma \ref{L:AA Cond Bergman Kernel Off-Diag} and for any $\ep>0$, the following expression is valid uniformly for $\abs{z}<\sqrt{\log N}$
\begin{equation}
\log\left[ \frac{d^2}{dzd\cl{w}}\bigg|_{z=w} \Pi_{\sigma_N}^{\xi}(\widehat{z},\widehat{w})\right]= C + \phi_N(\widehat{z}) + \log T(z,z)+O(N^{-1/2+\ep}),\label{E:Log BN}
\end{equation}
where $C$ is a constant depending on $N$ and, if we write $z=re^{i\theta},$ we have for $r$ small
\begin{equation}
  \label{E:DLog T}
  \frac{d}{dz}\bigg|_{z=re^{i\theta}}\log T(z,z)= \frac{r^{-1}\lr{e^{-i\theta}+O(N^{-1/2+\Gamma})}}{1+C_N\cdot r^{-2}N^{-1+2\Gamma}+O(r^{-1}N^{-1/2+\Gamma})+O(r^{-2}N^{-2+2\Gamma})},
\end{equation}
where $C_N$ are constants bounded away from $0$ and $\infty$ uniformly in $N.$
\end{corollary}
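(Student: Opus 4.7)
The two formulas in Corollary \ref{C:AA Log Cond Bergman Kernel} both follow from Lemma \ref{L:AA Cond Bergman Kernel Off-Diag} by specialization to the diagonal $w=z$. For (\ref{E:Log BN}), setting $w=z$ in the exponent of (\ref{E:BN Off-Diag}), the imaginary pair $i\widetilde{\gamma}_N(\widehat{z})-i\widetilde{\gamma}_N(\widehat{w})$ and the real combination $2z\overline{w}-|z|^2-|w|^2$ both vanish, so the exponent collapses to $\phi_N(\widehat{z})$, while the prefactor $N^3/\pi$ is absorbed into the additive constant $C$. Taking logarithms and using $\log(1+R_N)=O(N^{-1/2+\epsilon})$, which follows directly from (\ref{E:Remainder Term}), gives (\ref{E:Log BN}).

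For (\ref{E:DLog T}), set $\lambda:=\partial_z\phi_N(\xi)\cdot N^{-1/2}$, so that by hypothesis (\ref{E:Efield Condition}) one has $|\lambda|^2=C_N\,N^{1-2\Gamma}$ with $C_N$ bounded below (and bounded above by standard estimates on $\sigma_N$). Using the definition of $\gamma_N$ as the harmonic part of the second-order Taylor jet of $\phi_N$ at $\xi$, one Taylor-expands in $\widehat{z}=zN^{-1/2}$; the $O(N)$ bounds on $\partial_z^2\phi_N$ and $\partial_z\partial_{\overline{z}}\phi_N$ at $\xi$ yield
\[
\bigl[\partial_z\phi_N(\widehat{z})+\partial_z\gamma_N(\widehat{z})\bigr]N^{-1/2}=2\lambda+O(r),
\]
and similarly for the $\partial_{\overline{w}}$ derivatives at $w=z$. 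Substituting into the formula for $T(z,w)$ from Lemma \ref{L:AA Cond Bergman Kernel Off-Diag} with $w=z$, and Taylor-expanding $1-e^{-|z|^2}$ and $e^{-|z|^2}$ in $|z|^2=r^2\ll 1$, a direct calculation collapses $T(z,z)$ to $|1+z\lambda|^2$ at leading order, with a multiplicative error of relative size $O(rN^{-1/2+\Gamma})$ arising from the $O(r)$ corrections in $A$ and $B$ and from the higher terms in the exponential expansions.

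Differentiating, $\partial_z\log T(z,z)=\lambda/(1+z\lambda)$ up to the same relative corrections. Multiplying numerator and denominator by $\overline{(1+z\lambda)}/|z\lambda|^2$ and using $|z\lambda|^{-2}=C_N^{-1}r^{-2}N^{-1+2\Gamma}$ rationalizes the expression as
\[
\partial_z\log T(z,z)=\frac{(1/z)\bigl(1+(\overline{z}\overline{\lambda})^{-1}\bigr)}{1+2\,\mathrm{Re}\bigl((z\lambda)^{-1}\bigr)+|z\lambda|^{-2}}+\text{corrections}.
\]
Writing $1/z=r^{-1}e^{-i\theta}$ and identifying $(\overline{z}\overline{\lambda})^{-1}$ and $\mathrm{Re}((z\lambda)^{-1})$ with the displayed $O$-errors yields (\ref{E:DLog T}). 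The main obstacle is the bookkeeping: several subleading contributions — the $-|z|^2$ piece that appears from expanding $e^{-|z|^2}$ in $T$, the $O(r)$ corrections inside $A$ and $B$, and the cross terms produced by the rationalization — must each be shown to land in the advertised $O(N^{-1/2+\Gamma})$ term of the numerator or in the $O(r^{-1}N^{-1/2+\Gamma})$ and $O(r^{-2}N^{-2+2\Gamma})$ terms of the denominator, without leaking into the leading $e^{-i\theta}$ or $C_N r^{-2}N^{-1+2\Gamma}$ terms; care is required to avoid losing inverse powers of $r$ or $|\lambda|$ during this tracking.
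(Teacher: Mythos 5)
Your proposal follows essentially the same route as the paper's own proof: (\ref{E:Log BN}) is obtained exactly as in the paper by restricting (\ref{E:BN Off-Diag}) to the diagonal, absorbing the prefactor into $C$ and using the remainder bound, and for (\ref{E:DLog T}) your reduction of $T(z,z)$ to $\abs{1+z\lambda}^2$ with $\lambda=\partial_z\phi_N(\xi)N^{-1/2}$ followed by rationalizing $\partial_z\log T=\lambda/(1+z\lambda)$ is the same Taylor-expansion computation the paper carries out by expanding $T(z,z)$ and $\frac{d}{dz}T(z,z)$ separately and dividing. The one caveat is your last step: your own (correct) algebra gives a numerator correction $(\cl{z}\cl{\lambda})^{-1}=O(r^{-1}N^{-1/2+\Gamma})$, which can be identified with the displayed $O(N^{-1/2+\Gamma})$ only when $r\gtrsim N^{-1/2+\Gamma}$; this extra factor of $r^{-1}$ is equally glossed over in the paper's own intermediate formula for $\frac{d}{dz}T(z,z)$ (where $O(rN^{-1/2+\Gamma})$ appears to be a slip), and it is harmless downstream because the corresponding term $\frac{1}{z}(\cl{z}\cl{\lambda})^{-1}=(r^{2}\cl{\lambda})^{-1}$ is $\theta$-independent and, after dividing by the denominator $1+C_Nr^{-2}N^{-1+2\Gamma}$ and multiplying by $R_-$, still contributes only $O(N^{-\ep})$ to $\E{\curly N_{R_-}}$.
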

\begin{proof}
  Equation (\ref{E:Log BN}) follows from setting $z=w$ in (\ref{E:BN Off-Diag}). To derive (\ref{E:DLog T}), we put $z=w$ in the expression for $T(z,w)$ given in Lemma \ref{L:AA Cond Bergman Kernel Off-Diag} to see that
\begin{align*} T(z,z)&=\lr{1-e^{-\abs{z}^2}}\lr{1+\frac{1}{4}\abs{\Di{z}{\phi_N}(\widehat{z})N^{-1/2}+\Di{z}{\gamma_N}(\widehat{z})N^{-1/2}+z}^2}\\
&+e^{-\abs{z}^2}\Re\lr{1+\cl{z}\lr{\Di{z}{\phi_N}(\widehat{z})N^{-1/2}+\Di{z}{\gamma_N}(\widehat{z})N^{-1/2}}}.
\end{align*}
Observe that 
\[\frac{1}{2}\lr{\Di{z}{\phi_N}(\widehat{z})N^{-1/2}+\Di{z}{\gamma_N}(\widehat{z})N^{-1/2}+z}=\Di{z}{\phi_N(\xi)}N^{-1/2}+O(N^{-1})\]
Recalling that $\abs{\Di{z}{\phi_N}(\xi)}>C\cdot N^{-1+\Gamma}$ for some $\Gamma\in [0,\frac{1}{2}),$ we may set $z=re^{i\theta}$ and taylor expand to find that 
\[T(z,z)=r^2\cdot N^{-1} \lr{1+\abs{\Di{z}{\phi_N}(\xi)}^2}\lr{1+C_N\cdot N^{-1+2\Gamma}r^{-2}+O(r^{-1}N^{-1/2+\Gamma})}.\]
Similarly, we find that
\[\frac{d}{dz}T(z,z)=r^2\cdot N^{-1}\lr{1+\abs{\Di{z}{\phi_N}(\xi)}^2}r^{-1}\lr{e^{-i\theta}+O(rN^{-1/2+\Gamma})}.\]
Combining the expressions for $T(z,z)$ and $\frac{d}{dz}T(z,z)$ yields (\ref{E:DLog T}).

\end{proof}

Lemma \ref{L:AA Cond Bergman Kernel Off-Diag} allows us to conclude the following asymptotic expansion for $P_N^{\xi}.$ 
\begin{corollary}\label{C:AA Normalized Szego}
 Fix a K\"ahler normal coordinate centered at $\xi,$ and write $\widehat{u}=u\cdot N^{-1/2}.$ Then, for any $\ep>0,$ we have the following $C^{\infty}$ expansion:
 \begin{equation}
P_N^{\xi}(\widehat{z},\widehat{w})=\frac{\abs{1-e^{z\cl{w}}}}{\lr{1-e^{-\abs{z}^2}}^{1/2}\lr{1-e^{-\abs{w}^2}}^{1/2}}\cdot e^{-\frac{1}{2}\abs{z-w}^2}\lr{1+R_N(z,w)}.\label{E:Normalized Conditional}
\end{equation}

The remainder $R_N(z,w)$ satisfies the estimates (\ref{E:Remainder Estimate Near-Diag}). In particular, we find that for $\abs{z},\abs{w}$ small, we have for some constants $C_j,\, j=1,2,$
  \begin{align}
  \label{E:DZ Lambda}  \frac{d}{dz}P_N^{\xi}\lr{\widehat{z},\widehat{w}} &=  \lr{\cl{z}-\cl{w}} \lr{C_1+O(N^{-1/2+\ep})}\\
  \label{E:DW Lambda}  \frac{d}{dw}P_N^{\xi}\lr{\widehat{z},\widehat{w}}&=  \lr{z-w} \lr{C_1+O(N^{-1/2+\ep})}\\
  \label{E:DZDW Lambda}  \frac{d^2}{dzdw}P_N^{\xi}\lr{\widehat{z},\widehat{w}}&= O(N^{-1/2+\ep})\\
  \label{E:Den}  1-e^{-2\Lambda(z,w)}&=1-P_N^{\xi}(z,w)^2=\abs{z-w}^2\lr{C_2+O(\abs{z-w}^2)}.
  \end{align}
Moreover, the constants $C_j$ are uniformly bounded independent of $N.$
\end{corollary}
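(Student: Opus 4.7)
The plan is to derive (\ref{E:Normalized Conditional}) by substituting Lemma \ref{L:AA Cond Bergman Kernel Off-Diag} directly into the definition (\ref{E:Normalized Szego Def I}) of $P_N^\xi$, and then to read off the derivative estimates (\ref{E:DZ Lambda})--(\ref{E:Den}) from the resulting elementary closed-form expression.

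\textbf{Step 1 (cancellations in the normalized ratio).} I would substitute the expansion (\ref{E:BN Off-Diag}) into the numerator of $P_N^\xi$ and into the two on-diagonal factors in the denominator. Three simplifications occur at once. Taking $\abs{\cdot}$ annihilates the purely imaginary phase $e^{(i\twiddle{\gamma_N}(\widehat z) - i\twiddle{\gamma_N}(\widehat w))/2}$, which is already absent on the diagonal; the exponential $e^{(2z\cl w - \abs z^2 - \abs w^2)/2}$ becomes $e^{-\abs{z-w}^2/2}$ under $\abs{\cdot}$ and equals $1$ on the diagonal; and the prefactor $N^3/\pi$ together with the Hermitian weights $e^{(\phi_N(\widehat z) + \phi_N(\widehat w))/2}$ cancels exactly between numerator and denominator. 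What remains is the reduction
\[P_N^\xi(\widehat z,\widehat w) = \frac{\abs{T(z,w)}}{\sqrt{T(z,z)\,T(w,w)}}\,e^{-\abs{z-w}^2/2}\,(1 + R_N(z,w)),\]
where $R_N$ is a combination of the remainders from Lemma \ref{L:AA Cond Bergman Kernel Off-Diag} and so satisfies (\ref{E:Remainder Estimate Near-Diag}); the denominators are bounded away from zero on the regime $\abs z, \abs w < \sqrt{\log N}$ by the estimates of Corollary \ref{C:AA Log Cond Bergman Kernel}.

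\textbf{Step 2 (analysis of the $T$-ratio).} This is the technical heart of the argument. Setting $a(z) := \Di{z}{\phi_N}(\widehat z)N^{-1/2} + \Di{z}{\gamma_N}(\widehat z)N^{-1/2}$, the hypothesis $\abs{d\phi_{\sigma_N}(\xi)} > C N^{1-\Gamma}$ makes $a(z)$ of size $N^{1/2 - \Gamma}$, and similarly $\overline{a(w)}$. The explicit form of $T$ in Lemma \ref{L:AA Cond Bergman Kernel Off-Diag} shows that these large quantities enter $T(z,w)$ only through the factorizable combinations $(a(z) + 2\cl w - \cl z)$ and $(\overline{a(w)} + 2z - w)$. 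A direct bilinear expansion of $T(z,w)$ and of $T(z,z), T(w,w)$ then shows that these contributions factor out of the normalized ratio $\abs{T(z,w)}/\sqrt{T(z,z)T(w,w)}$ up to an absolute error of order $N^{-1/2 + \Gamma + \ep}$, leaving the universal piece $T_0(z,w) := T(z,w)\big|_{a \equiv 0}$. A direct evaluation of $T_0$ on and off diagonal (parallel to the unconditional computation carried out in \cite{NV}) identifies the resulting prefactor with the one in (\ref{E:Normalized Conditional}), after possibly absorbing a real exponential factor into $e^{-\abs{z-w}^2/2}$ via the identity $\abs{1-e^{z\cl w}} = e^{\Re(z\cl w)}\abs{1 - e^{-z\cl w}}$.

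\textbf{Step 3 (derivative estimates).} The estimates (\ref{E:DZ Lambda})--(\ref{E:Den}) follow by direct differentiation of the elementary prefactor in (\ref{E:Normalized Conditional}). Expanding $1 - e^{-u} = u(1 - u/2 + \ldots)$ in both numerator and denominator gives a Taylor expansion of the form
\[1 - P_N^\xi(z,w)^2 = C_2 \abs{z-w}^2 + O(\abs{z-w}^4) + O(N^{-1/2 + \ep}),\]
which is exactly (\ref{E:Den}). Because $P_N^\xi$ attains its maximum on the diagonal, its first derivatives in $z$ and in $w$ vanish there; a direct computation shows they factor through $(\cl z - \cl w)$ and $(z - w)$, respectively, giving (\ref{E:DZ Lambda}) and (\ref{E:DW Lambda}). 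The mixed derivative (\ref{E:DZDW Lambda}) is of pure constant order plus error. The $R_N$ contributions to all of these derivatives are handled by applying (\ref{E:Remainder Estimate Near-Diag}) with $j = 1, 2$.

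\textbf{Main obstacle.} The difficult step is the algebraic verification in Step 2 that the naively dominant $a(z), \overline{a(w)}$ contributions to $T(z,w)$ cancel exactly in the normalized ratio, leaving only a residual error of order $N^{-1/2 + \Gamma + \ep}$ rather than the naive $N^{1/2 - \Gamma}$. This relies on the precise bilinear structure of $T$ from Lemma \ref{L:AA Cond Bergman Kernel Off-Diag} and is the only place where the electrostatic hypothesis (\ref{E:Efield Condition}) enters explicitly; it also forces the $2\Gamma$ term appearing in the exponent $-3/2 + 2\Gamma + 3\ep$ of Theorem \ref{T:Pairing}.
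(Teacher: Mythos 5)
Your Steps 1 and 3 follow the same skeleton as the paper: substitute (\ref{E:BN Off-Diag}) into (\ref{E:PN From BN}), cancel the weights and the prefactor, and read off the derivative estimates at the end (the paper obtains (\ref{E:DZ Lambda})--(\ref{E:DW Lambda}) from the fact that $P_N^{\xi}$ is a correlation, hence $\leq 1$ with equality exactly on the diagonal, and (\ref{E:Den}) by writing $P_N^{\xi}=e^{-\Lambda}$ and differentiating $\Lambda$, using (\ref{E:Remainder Estimate Near-Diag})). The genuine gap is in your Step 2, which you yourself flag as the heart. The large quantities do \emph{not} ``factor out of the normalized ratio leaving $T_0$.'' Writing $a(z),b(w)$ for the two terms of size at least $N^{1/2-\Gamma}$, the expression for $T$ contains the \emph{additive} cross terms $e^{-z\cl{w}}\lr{\tfrac{z}{2}A+\tfrac{\cl{w}}{2}B}$, and on the scale $\abs{z}\asymp \abs{a}^{-1}\asymp N^{-1/2+\Gamma}$ --- which lies inside the range $\abs{z},\abs{w}\leq N^{-1/2+\Gamma+\ep}$ where the ``in particular'' estimates are invoked in Lemma \ref{L:Variance} --- these terms are of unit size and do not cancel between numerator and denominator. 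Concretely, putting $z=0$ in the formula for $T$ gives $T(0,0)=1$ and $T(0,w)=1+\tfrac{\cl{w}}{2}b(w)+O(\abs{w}^2)$, which retains the large parameter at order one and can nearly vanish for suitable $\abs{w}\asymp N^{-1/2+\Gamma}$, whereas $T_0(0,w)=1-\tfrac{1}{2}\abs{w}^2$; no bilinear rearrangement yields your claimed reduction with multiplicative error $1+O(N^{-1/2+\Gamma+\ep})$. Indeed the mechanism is the reverse of what you propose: on the circle $\abs{z}=\abs{w}=R_+$ it is the $a,b$-dominated term $\tfrac14\lr{1-e^{-z\cl{w}}}AB$ --- which to leading order factorizes as a function of $z$ times a conjugate function of $w$, so that $a,b$ cancel in the ratio --- that produces the prefactor $\abs{1-e^{-z\cl{w}}}\big/\sqrt{(1-e^{-\abs{z}^2})(1-e^{-\abs{w}^2})}$, while $T_0$ is the part being discarded; on $\abs{z}=\abs{w}=R_-$ the roles are exchanged and the prefactor is $\approx 1$.

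Second, even in the regime where the $a,b$-terms are negligible, your identification of the $T_0$-ratio with the stated prefactor fails: expanding, $T_0(z,w)=1+z\cl{w}-\tfrac12\lr{\abs{z}^2+\abs{w}^2}+O(\abs{z}^4+\abs{w}^4)$, so $\abs{T_0(z,w)}/\sqrt{T_0(z,z)T_0(w,w)}=1-\tfrac12\abs{z-w}^2+\ldots$, whereas $\abs{1-e^{-z\cl{w}}}/\sqrt{(1-e^{-\abs{z}^2})(1-e^{-\abs{w}^2})}=1+\tfrac14\abs{z-w}^2+\ldots$; the discrepancy sits exactly at the order $\abs{z-w}^2$ that fixes the constant $C_2$ in (\ref{E:Den}), so it cannot be absorbed into the Gaussian factor as you suggest. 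By contrast, the paper never attempts such a cancellation: it controls $\abs{T(z,w)}/\sqrt{T(z,z)T(w,w)}$ by exploiting that $P_N^{\xi}$ is a correlation coefficient (Cauchy--Schwarz, maximum $1$ attained on the diagonal) together with the remainder bounds, and it is this diagonal maximality, not differentiation of an explicit closed form obtained from a $T_0$-reduction, that underlies (\ref{E:DZ Lambda})--(\ref{E:DW Lambda}). As written, your Step 2 would not establish (\ref{E:Normalized Conditional}) uniformly on the region where it is used.
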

\begin{proof}
Notice that
\[\frac{d^2}{dzd\cl{w}}B{\sigma_N}^{\xi}(\widehat{z},\widehat{w})=\inprod{\frac{d}{dz}\Phi_N^{\xi}(\widehat{z})}{\frac{d}{dw}\Phi_N^{\xi}(\widehat{w})}.\]
Therefore, by Cauchy-Schwartz, we find that 
\[P_N^{\xi}(\widehat{z},\widehat{w})\leq 1.\]
On the other hand, we see that $P_N^{\xi}(\widehat{z},\widehat{z})=1.$ Therefore, writing
\begin{equation}
P_N^{\xi}(\widehat{z}, \widehat{w})=\frac{\abs{\frac{d^2}{dzd\cl{w}}B_{N, \sigma_N}^{\xi}(\widehat{z},\widehat{w})}}{\lr{\frac{d^2}{dzd\cl{w}}\big|_{z=w}B_{N,\sigma_N}^{\xi}(\widehat{z},\widehat{w})\cdot \frac{d^2}{dzd\cl{w}}\big|_{w=z}B_{N,\sigma_N}^{\xi}(\widehat{z},\widehat{w})}^{1/2}},\label{E:PN From BN}
\end{equation}
we see that the normalized expression
\[\frac{\abs{T(z,w)}}{\sqrt{T(z,z)T(w,w)}}\]
achieve a strict maximum value of $1$ when $z=w,$ and hence we may write
\[\frac{\abs{T(z,w)}}{\sqrt{T(z,z)T(w,w)}}=1+Q_N(z,w),\]
where the remainder terms $Q_N(z,w)$ satisfy the estimates required of the remainders $R_N.$ Substituting expression (\ref{E:BN Off-Diag}) into (\ref{E:PN From BN}) shows that 
\[P_N^{\xi}(\widehat{z}, \widehat{w})= \frac{\abs{1-e^{-z\cl{w}}}e^{-\frac{1}{2}\abs{z-w}^2}}{\lr{1-e^{-\abs{z}^2}}^{1/2}\lr{1-e^{-\abs{w}^2}}^{1/2}}\cdot \lr{1+R_N(z,w)}\]
for with $R_N(z,w)=O(N^{-1/2+\ep}).$ The estimates (\ref{E:Remainder Estimate Near-Diag}) follow from the analogous estimates in Theorem \ref{T:Szego Asymptotics}.

If we fix $w$ and view $P_N^{\xi}(z,w)$ as a function of $z,\cl{z},$ we see that $P_N^{\xi}$ is maximized on the diagonal $z=w$ and achieves a value of $1.$ Estimates (\ref{E:DZ Lambda}) and (\ref{E:DW Lambda}) now follow. To verify (\ref{E:Den}) we may write 
\[P_N^{\xi}\lr{\widehat{z},\widehat{w}}^2=e^{-\Lambda(z,w)},\]
where we may write $\Lambda(z,w):=-\log P_N^{\xi}\lr{\widehat{z},\widehat{w}}$ as
\[\frac{1}{2}\lr{\log(1-e^{-\abs{z}^2})+\log(1-e^{-\abs{w}^2})-\log\abs{1-e^{-z\cl{w}}}+\abs{z-w}^2}+\log\lr{1+R_N(z,w)}.\]
Therefore, 
\[\frac{d^2}{dzdw}P_N^{\xi}\lr{\widehat{z},\widehat{w}}= \lr{-\frac{d^2}{dzdw}\Lambda(z,w)+\Di{z}{}\Lambda(z,w)\Di{w}{}\Lambda(z,w)}e^{-\Lambda(z,w)}.\]
Differentiating $\Lambda(z,w)$ and using the remainder estimates (\ref{E:Remainder Estimate Near-Diag}) completes the derivation. 
\end{proof}
We now turn to the proof of Lemma \ref{L:AA Cond Bergman Kernel Off-Diag}.

\begin{proof}[Proof of Lemma \ref{L:AA Cond Bergman Kernel Off-Diag}]
We fix K\"ahler normal coordinates around $\xi.$ Our proof is based on Lemma \ref{L:Step 1}. To formulate it, we continue to write $\phi_N$ for the K\"ahler potential for $\w_h$ relative to $\sigma_N:$
\[\phi_{N}(z)=\log \norm{\sigma_N(z)}_h^{-2}\]
valid near $\xi.$ We will also write $\twiddle{\gamma_N}$ for the harmonic conjugate of $\gamma_N.$

\begin{Lem}\label{L:Step 1}
For each $N,$ we have
\[\Pi_{\sigma_N}^{\xi}(z,w)=E_N(z,\alpha, w,\beta)\cdot\lr{\widehat{\Pi}_N(z,\alpha, w,\beta)-\frac{\widehat{\Pi}_N(z,\alpha, \xi, 0)\cl{\widehat{\Pi}_N(\xi,0,w,\beta)}}{\abs{\widehat{\Pi}_N(\xi,\alpha, \xi,\beta)}}},\]
where
\[E_N(z,\alpha, w,\beta)=e^{\frac{N}{2}\lr{\phi(z)+\phi(w)-i\twiddle{\gamma_N}(w)+i\twiddle{\gamma_N}(z)}+iN\lr{\alpha- \beta}}.\] 
\end{Lem}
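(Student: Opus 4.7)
My plan is to derive the identity in three steps: apply the Gaussian conditioning formula to $\Pi_N^\xi$, rewrite the resulting expression in the local trivialization given by $\sigma_N$, and then lift to the principal $S^1$-bundle $X$ to identify the prefactor as $E_N$. For the first step, Definition \ref{D:Conditional Ensemble} tells us that a section in $HGE_N^\xi(L,h)$ is obtained by conditioning $\tilde s_N\in HGE_N(L,h)$ on $ev_\xi(\tilde s_N)=0$, so the standard conditional covariance formula for centered complex Gaussian fields gives
\[
\Pi_N^\xi(z,w)=\Pi_N(z,w)-\frac{\Pi_N(z,\xi)\otimes_{h^N_\xi}\overline{\Pi_N(w,\xi)}}{\Pi_N(\xi,\xi)},
\]
where the contraction at $\xi$ is carried out using the Hermitian metric $h^N$ on $L^N|_\xi$. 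This identity is the skeleton of the proof.

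Since $\sigma_N(\xi)\neq 0$ by hypothesis, I would next divide through by $\sigma_N(z)\otimes\overline{\sigma_N(w)}$ to convert the above into the local scalar identity
\[
\Pi_{\sigma_N}^\xi(z,w)=\Pi_{\sigma_N}(z,w)-\frac{\Pi_{\sigma_N}(z,\xi)\,\overline{\Pi_{\sigma_N}(w,\xi)}}{\Pi_{\sigma_N}(\xi,\xi)},
\]
the norm $\|\sigma_N(\xi)\|_{h^N}^2$ cancelling between the contraction in the numerator and the denominator. To introduce the Szegő kernel on $X$, I would then fix a preferred frame $e$ for $h$ at $\xi$ and the associated Heisenberg coordinates (Definitions \ref{D:Preferred Frame}--\ref{D:Heisenberg Coords}), write $\sigma_N=g\cdot e^{\otimes N}$ for a nonvanishing local holomorphic $g$, and apply the lift formula (\ref{E:Section Lift}) to an orthonormal basis of $H_N$. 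Since $S_j=\tilde f_j\sigma_N=(\tilde f_j g)e^{\otimes N}$, this gives
\[
\widehat\Pi_N(\alpha,z;\beta,w)=e^{iN(\alpha-\beta)}\,\|e(z)\|_h^N\,\|e(w)\|_h^N\,g(z)\overline{g(w)}\,\Pi_{\sigma_N}(z,w),
\]
together with analogous formulas at $(z,\xi)$, $(\xi,w)$, $(\xi,\xi)$ (using $\|e(\xi)\|_h=1$ for the preferred frame). Substituting these into the previous display produces the claimed expression, with $E_N$ equal to the reciprocal of the prefactor above, modulo the use of Hermitian symmetry $\widehat\Pi_N(a,x;b,y)=\overline{\widehat\Pi_N(b,y;a,x)}$ to write the correction in the form displayed in the lemma statement.

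The main obstacle is recognizing this prefactor as the $E_N$ stated in the lemma. Its modulus is immediate: $|g(z)|\,\|e(z)\|_h^N=\|\sigma_N(z)\|_{h^N}=e^{-\phi_N(z)/2}$ supplies the factor $e^{(\phi_N(z)+\phi_N(w))/2}$. The phase is subtler: $\log g$ is a local holomorphic function whose real part is the harmonic combination $-\phi_N/2+N\psi/2$, where $\psi=-\log\|e\|_h^2$ is the Kähler potential of $h$ in the preferred frame, so $\arg g=\Im\log g$ is determined as the harmonic conjugate of this real part. In Kähler normal coordinates at $\xi$ one has $\psi(z)=|z|^2+O(|z|^4)$, contributing no $z^2$ or $\bar z^2$ terms, so the harmonic conjugate of $-\phi_N/2+N\psi/2$ agrees with $-\tilde\gamma_N/2$ — the harmonic conjugate of the quadratic Taylor polynomial $\gamma_N$ of $\phi_N$ at $\xi$ — to the order governing the subsequent asymptotic expansion in Lemma \ref{L:AA Cond Bergman Kernel Off-Diag}. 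Carefully tracking the signs and factors of $N$ in this phase identification yields the stated form of $E_N$.
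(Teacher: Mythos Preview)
Your overall strategy matches the paper's: derive the conditional covariance formula (the paper phrases this via the ``coherent state'' $\Psi_N^\xi$ spanning the orthocomplement of $H_N^\xi$, which is exactly your Gaussian conditioning identity), divide through by $\sigma_N(z)\otimes\overline{\sigma_N(w)}$, and then lift to $X$ to pass to $\widehat\Pi_N$. The first two steps are fine and essentially identical to the paper.

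The gap is in your final paragraph. You take an \emph{arbitrary} preferred frame $e$, set $\sigma_N=g\cdot e^{\otimes N}$, and then argue that $\arg g$ agrees with $-\tfrac{1}{2}\tilde\gamma_N$ only ``to the order governing the subsequent asymptotic expansion.'' But the lemma is stated as an \emph{exact} identity, and your argument does not prove it as such. Two preferred frames at $\xi$ differ by a nonvanishing holomorphic factor whose modulus is $1+O(|z|^3)$; this factor alters $\arg g$ at every order beyond the quadratic, so with a generic preferred frame the phase of your prefactor genuinely differs from $-\tfrac{1}{2}\tilde\gamma_N$ by uncontrolled higher-order terms.

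The paper resolves this by \emph{choosing} the preferred frame to be the specific holomorphic multiple
\[
e_\xi \;:=\; e^{\frac{1}{2}(\gamma_N+i\tilde\gamma_N)}\,\sigma_N
\]
of $\sigma_N$ itself. Since $\gamma_N+i\tilde\gamma_N$ is holomorphic by construction, $e_\xi$ is a holomorphic frame, and because $\gamma_N$ is the harmonic second-order jet of $\phi_N$, one checks directly that $\|e_\xi\|$ satisfies the preferred-frame normalization of Definition~\ref{D:Preferred Frame}. With this choice the factor $g=e^{-\frac{1}{2}(\gamma_N+i\tilde\gamma_N)}$ is known \emph{exactly}, and the lift formula (\ref{E:Section Lift}) gives
\[
\widehat{\sigma_N}(z,\alpha)=e^{-\frac{1}{2}(\phi_N(z)+i\tilde\gamma_N(z))+iN\alpha},
\]
so that $\widehat{\sigma_N}(z,\alpha)\,\overline{\widehat{\sigma_N}(w,\beta)}=E_N(z,\alpha,w,\beta)^{-1}$ exactly. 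The Heisenberg coordinates in the lemma---and hence the $\widehat\Pi_N$ appearing in it---are those determined by this particular $e_\xi$; once you make this choice, your harmonic-conjugate computation is unnecessary.
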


Assuming this Lemma for the moment, we substitute into (\ref{E:Conditioned Szego Lift}) the $C^{\infty}$ asymptotic expansion 
\[\widehat{\Pi}_N(\widehat{z},\alpha, \widehat{w}, \beta)= e^{-z\cl{w}+\frac{1}{2}\lr{\abs{z}^2+\abs{w}^2}}+O(N^{-1/2+\ep}),\]
from (\ref{E:Szego Near Off-Diag}), which is valid in Heisenberg coordinates on $\xi,$ to obtain the following expression for $\Pi_{\sigma_N}^{\xi}(\widehat{z},\widehat{w}):$
\begin{equation}
\frac{N}{\pi}e^{\frac{1}{2}\lr{\phi_N(\widehat{z})+\phi_N(\widehat{w})+i\twiddle{\gamma_N}(\widehat{z})-i\twiddle{\gamma_N}(\widehat{w})+2z\cl{w}-\abs{z}^2-\abs{w}^2}}\lr{1-e^{-z\cl{w}}}\lr{1+R_N(z,w)}.\label{E:Conditional Bergman Scaled}
\end{equation}
Differentiating this expression in $z$ and in $\cl{w}$ shows that 
\[\frac{d^2}{dzd\cl{w}}\Pi_{\sigma_N}^{\xi}(\widehat{z},\widehat{w})=\frac{N}{\pi} e^{\frac{1}{2}\lr{\phi_N(\widehat{z})+\phi_N(\widehat{w})+i\twiddle{\gamma_N}(\widehat{z}-i\twiddle{\gamma_N}(\widehat{w}))}} \cdot T(z,w)\cdot \lr{1+R_N(z,w)},\]
as desired. We now turn to the proof of Lemma \ref{L:Step 1}.

\begin{proof}[Proof of Lemma \ref{L:Step 1}]
From (\ref{E:BS Rln}), we see immediately that
\begin{equation}
\widehat{\Pi}_N^{\xi}(z,\alpha, w,\beta)= \Pi_{\sigma_N}^{\xi}(z,w)\cdot \widehat{\sigma}_N(z,\alpha)\otimes \widehat{\cl{\sigma_N}}(w,\beta).\label{E:Conditioned Szego Lift}
\end{equation}
We therefore start with the following
\begin{claim}
Fix $\xi \in \Sigma.$ In heisenberg coordinates centered at $\xi$ on $X,$ we have
\begin{equation}
\widehat{\sigma}_N(z, \alpha)\otimes \widehat{\cl{\sigma_N}}(w,\beta)= E_N(z,\alpha, w,\beta)^{-1}.\label{E:Sigma Lift v0}
\end{equation}
\end{claim}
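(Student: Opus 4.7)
The plan is to verify the claim by a direct local computation in Heisenberg coordinates, using the lifting formula (\ref{E:Section Lift}) and the preferred-frame normalization. Since $\xi\notin\{\sigma_N=0\}$, I may choose a preferred frame $e$ for $(L,h)$ at $\xi$ on a neighborhood $U$ and write $\sigma_N = g\cdot e^{\otimes N}$ locally, for some nonvanishing holomorphic function $g$ on $U$. Then (\ref{E:Section Lift}) yields
\[\widehat{\sigma}_N(z,\alpha) = e^{iN\alpha}\norm{e(z)}_h^N\,g(z),\qquad \widehat{\overline{\sigma_N}}(w,\beta) = \overline{\widehat{\sigma_N}(w,\beta)} = e^{-iN\beta}\norm{e(w)}_h^N\,\overline{g(w)}.\]

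Next, I will extract the modulus and argument of $g$ from the definition of $\phi_N$. Since $\norm{\sigma_N(z)}_h^2 = \abs{g(z)}^2\norm{e(z)}_h^{2N}$ and $\phi_N = \log\norm{\sigma_N}_h^{-2}$, setting $\psi(z) := \log\norm{e(z)}_h^{-2}$ gives $\abs{g(z)}^2 = e^{N\psi(z) - \phi_N(z)}$. Holomorphicity of $g$ forces $\log\abs{g}$ to be harmonic, so $\phi_N - N\psi = -2\log\abs{g}$ is harmonic; consequently $\arg g$ is the harmonic conjugate of $-\tfrac{1}{2}(\phi_N - N\psi)$, determined up to a real additive constant which can be absorbed into a unit-modulus rescaling of $e$.

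The third step invokes the preferred-frame normalization $\norm{e(z)}_h = 1 - \tfrac{1}{2}\abs{z}^2 + o(\abs{z}^2)$, which gives $\psi(z) = \abs{z}^2 + O(\abs{z}^4)$ in K\"ahler normal coordinates; in particular $\psi$ has no pure-holomorphic or pure-antiholomorphic terms through order two. Therefore the leading harmonic part of the harmonic function $\phi_N - N\psi$ coincides with $\gamma_N$, the leading harmonic part of $\phi_N$ defined in Lemma \ref{L:AA Cond Bergman Kernel Off-Diag}. Taking harmonic conjugates, $\arg g(z) = -\tfrac{1}{2}\twiddle{\gamma_N}(z)$ modulo higher-order harmonic corrections, which will be absorbed into the remainder $R_N$ in the subsequent asymptotic expansion.

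Finally, multiplying, the factors $\norm{e(z)}_h^N\abs{g(z)} = e^{-\phi_N(z)/2}$ (and similarly at $w$) combine with the phase $e^{i(\arg g(z) - \arg g(w))} = e^{-i(\twiddle{\gamma_N}(z)-\twiddle{\gamma_N}(w))/2}$ and the angular contribution $e^{iN(\alpha-\beta)}$ to yield precisely $E_N(z,\alpha,w,\beta)^{-1}$. The only real content of the proof is the third paragraph: the identification of $\arg g$ with $-\tfrac{1}{2}\twiddle{\gamma_N}$ via the harmonic conjugate, which is made clean only because the K\"ahler-normal-coordinate choice of preferred frame kills the non-harmonic quadratic contribution of $\psi$. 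The remaining steps are mechanical bookkeeping of the lift (\ref{E:Section Lift}).
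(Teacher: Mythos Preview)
Your approach is essentially the same as the paper's --- use a preferred frame at $\xi$, apply the lifting formula (\ref{E:Section Lift}), and identify the phase via the harmonic conjugate of the potential --- but you run it in the opposite direction. The paper \emph{constructs} the preferred frame from $\sigma_N$ by setting $e_\xi := e^{\frac{1}{2}(\gamma_N + i\twiddle{\gamma_N})}\sigma_N$ (so that $\sigma_N = e^{-\frac{1}{2}(\gamma_N+i\twiddle{\gamma_N})}e_\xi$ with the holomorphic coefficient known explicitly), checks that this $e_\xi$ satisfies the preferred-frame normalization, and then reads off (\ref{E:Sigma Lift v0}) as an \emph{exact} identity in the Heisenberg coordinates determined by that particular frame. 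You instead start from an arbitrary preferred frame $e$, write $\sigma_N=g\,e^{\otimes N}$, and recover $\arg g$ as the harmonic conjugate of $-\tfrac{1}{2}(\phi_N-N\psi)$.

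The one point to flag is your third paragraph: with a generic preferred frame, $\arg g$ agrees with $-\tfrac{1}{2}\twiddle{\gamma_N}$ only up to higher-order harmonic terms, so you prove (\ref{E:Sigma Lift v0}) only modulo those corrections, not as the exact equality the Claim asserts. You correctly observe that this is harmless for the downstream asymptotics (the corrections land in $R_N$), but strictly speaking the Claim as stated requires the paper's specific choice of Heisenberg coordinates. The paper's route buys exactness; yours buys the freedom of frame at the cost of a remainder. Either is adequate for Lemma \ref{L:AA Cond Bergman Kernel Off-Diag}.
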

\begin{proof}
Define the frame
\[e_{\xi}:=e^{\frac{N}{2}\lr{\gamma_N+i\twiddle{\gamma_N}}}\cdot \sigma_N\]
for $L^N$ near $\xi.$ We have, in the sense of Definition \ref{D:Preferred Frame}, that $e_{\xi}$ is a preffered frame for $L^N$ at $\xi.$ Therefore, in Heisenberg coordinates centered at $\xi$ on $X,$ we have
\[\widehat{\sigma_N}(z,\alpha)= \norm{e_{\xi}(z)}_he^{-\frac{N}{2}\lr{\gamma_N+i\twiddle{\gamma_N}}}\cdot e^{iN\alpha}.\]
Using that $\norm{e_{\xi}(z)}_h=e^{-\frac{N}{2}\lr{\gamma_N(z)-\phi_N(z)}},$ we conclude
\begin{equation}
  \label{E:Sigma Lift}
  \widehat{\sigma_N}(z,\alpha)=e^{-\frac{N}{2}\lr{\phi_N(z)+i\twiddle{\gamma_N}(z)}+iN\alpha}.
\end{equation}
Applying this formula to the lifts of $\sigma_N(z)$ and $\cl{\sigma_N}(w)$ to $X$ and taking their tensor product completes the argument.
\end{proof}
To verify (\ref{E:Conditioned Szego Lift}) it therefore remains to prove that 
\begin{equation}
\widehat{\Pi}_N^{\xi}(z,\alpha,w,\beta)= \widehat{\Pi}_N(z,\alpha, w,\beta)-\frac{\widehat{\Pi}_N(z,\alpha,\xi,0)\cl{\widehat{\Pi}_N(\xi,0,w,\beta)}}{\abs{\widehat{\Pi}_N(\xi,\alpha,\xi,\beta)}}.\label{E:Conditioned Lift II}
\end{equation}
This is precisely equation $(27)$ from \cite{Conditional}. For the reader's conveince, we reproduce the proof. To do this, we introduce, as in the proofs of Lemma 4 \S 8 of \cite{ZC} and Proposition 3.9 of \cite{Conditional}, the ``coherent state'' at $\xi.$ To do this,  
\[\Psi_N^{\xi}(z):=\frac{\Pi_{\sigma_N}(z,\xi)}{\Pi_{\sigma_N}(\xi,\xi)^{1/2}}\cdot \sigma_N(z) \in H_{hol}^0(\Sigma, L^N).\]
We recall from \S\ref{S:Def SZ} that $\Pi_{\sigma_N}$ is the \textit{unconditional} $N^{th}$ Bergman kernel relative to $\sigma_N,$ which we wrote as $\Pi_{\sigma_N}(z,w)=\sum_{j=1}^{d_n} f_j(z)\cl{f_j(w)}.$ Hence, 
\[\Psi_N^{\xi}(z)=\frac{1}{\lr{\sum_{j=1}^{d_N-1}\abs{f_j(\xi)}^2}^{1/2}}\sum_{j=1}^{d_N}\cl{f_j(\xi)} f_j(z)\cdot \sigma_N(z).\]
Using the weighted $L^2$ inner product (\ref{E:Inner Product}), we see that for every $s\in H_{hol}^0(\Sigma, L^N)=H_N$ satisfying $s(\xi)=0$
\[\inprod{s}{\Psi_N^{\xi}}=0\]
and that $\norm{\Psi_N^{\xi}}=1.$ Therefore, $\Psi_N^{\xi}$ spans the orthocomplement in $H_N$ to $H_N^{\xi}$ and 
\[\Pi_N^{\xi}(z,w)=\Pi_N(z,w)-\Psi_N^{\xi}(z)\otimes \cl{\Psi_N^{\xi}(w)}.\]
Lifting this equation to $X,$ (\ref{E:Conditioned Lift II}) reduces to showing that 
\[\widehat{\Psi}_N^{\xi}(z,\alpha)\otimes \widehat{\cl{\Psi_N ^{\xi}}}(w,\beta)=\frac{\widehat{\Pi}_N(z,\alpha,\xi,0)\widehat{\Pi}_N(\xi, 0, w,\beta)}{\widehat{\Pi}_N(\xi,\alpha,\xi,\beta)}.\]
To verify this equality, we note that, by formula (\ref{E:Sigma Lift}) for the lift of $\sigma_N,$ 
\[\widehat{\Psi}_N^{\xi}(z,\alpha)=\frac{1}{\sqrt{\sum \abs{f_j(\xi)}^2}}\sum \cl{f_j}(\xi)f_j(z) \cdot e^{-\frac{N}{2}\lr{\phi(z) + i \twiddle{\gamma_N}(z)}+iN\alpha}.\]
Hence,  
\[\widehat{\Psi}_N^{\xi}(z,\alpha)\otimes \widehat{\cl{\Psi}_N^{\xi}}(w,\beta)=\frac{\widehat{\Pi}_N(z,\alpha, \xi,0)E_N(z,\alpha,\xi,0)\widehat{\Pi}_N(\xi,0, w,\beta)E_N(\xi,0,w,\beta)}{\widehat{\Pi}_N(\xi,\alpha,\xi,\beta)E_N(\xi,\alpha, \xi,\beta)}E_N(z,\alpha, w,\beta)^{-1}.\]
Observing that 
\[\frac{E_N(z,\alpha,\xi,0)E_N(\xi,0,w,\beta)}{E_N(\xi,\alpha,\xi,\beta)}=E_N(z,\alpha, w,\beta)\]
completes the proof. 
\end{proof}
\end{proof}

\bibliographystyle{plain}
\bibliography{Bibliography_2}

\end{document}